\documentclass[a4paper,12pt]{elsarticle}
\usepackage{amsmath}
\usepackage[english]{babel}
\usepackage{graphicx}
\usepackage[usenames,dvipsnames]{color} 
\usepackage[numbers]{natbib}
\usepackage[footnotesize,bf,sf,center]{caption}
\usepackage{float}
\usepackage[dvips]{epsfig}
\usepackage[bottom=2.5cm,top=2.5cm,left=3cm,right=2cm]{geometry}
\usepackage{calc}
\usepackage{soul}
\usepackage{amsmath,amsfonts,amssymb}
\usepackage{subfigure}
\usepackage{setspace}
\usepackage{amssymb}
\usepackage{mathrsfs}
\usepackage{algorithm}
\usepackage{algpseudocode}
\usepackage{dsfont}
\usepackage{epstopdf}
\usepackage[utf8]{inputenc}
\usepackage{amsfonts,amsmath}
\usepackage{nicefrac,xfrac}
\usepackage{multirow}
\newcommand{\ve}[1]{\mbox{\boldmath $#1$}}
\newtheorem{theorem}{Theorem}[section]

\newtheorem{lemma}[theorem]{Lemma}

\newtheorem{remark}{Remark}[section]

\newdefinition{rmk}{Remark}
\newcommand{\proof} [1]{ \noindent {\bf Proof.} #1 \hfill\rule{0.5em}{1.2ex} \par\medskip}
\usepackage{graphicx, ulem}
\usepackage{comment}

\begin{document}
\begin{frontmatter}

  \renewcommand\arraystretch{1.0}

    \title{\textbf{Stress-divergence, Laplacian, and rotational forms of the incompressible Navier--Stokes equations with variable viscosity}}
      
    \author{
   {\bf B.~Cisternas}$^1$, \ 
   {\bf D.R.Q.~Pacheco}$^{2,3}$,
   {\bf F.~Galarce}$^1$
        \& \  
   {\bf E.~Castillo}$^4$ \\
   {\small ${}^{1}$ School of Civil Engineering, Pontificia Universidad Católica de Valparaíso, Valparaíso, Chile} \\
   {\small ${}^{2}$ Chair for Computational Analysis of Technical Systems, RWTH Aachen University, Germany}\\
   {\small ${}^{3}$ Chair of Methods for Model-based Development in Computational Engineering, RWTH}\\
    {\small ${}^{4}$ Department of Mechanical Engineering, University of Santiago de Chile, Santiago, Chile}}
\begin{keyword}
Generalized Newtonian fluids \sep IMEX methods
\sep Variable viscosity \sep outlet boundary conditions \sep Finite element method \sep Rheology
\end{keyword}
\begin{abstract}
In the Navier--Stokes equations, incompressibility allows rewriting the viscous term in various forms leading to distinct numerical properties and flow descriptions. Furthermore, models accounting for non-Newtonian, thermal or turbulent effects often break the constant-viscosity assumption, thereby producing additional consistency terms. In this context, the present work compares the classical symmetric-gradient diffusion term with more recent variable-viscosity generalizations of the Laplacian and rotational forms. We discuss, analyze and test their differences with respect to implementation, efficiency, numerical stability and outflow boundary conditions. With a focus on time-dependent flows, we consider second-order implicit-explicit (IMEX) temporal discretizations aimed at improving efficiency and numerical stability. Through a rigorous stability analysis, we show how selected explicit treatments can bypass algorithmic nonlinearities without inducing CFL conditions. Our numerical results highlight important differences between the three viscous formulations---especially in the presence of outflow boundaries, for which the generalized Laplacian form proves more suitable in diffusion-dominated regimes. 
\end{abstract}
\end{frontmatter}

\section{Introduction}
The common assumption of constant viscosity is often violated in practical flow scenarios, such as in the presence of temperature gradients, non-Newtonian behavior, or turbulence closures. Allowing for variable viscosity in the Navier--Stokes equations renders many traditional numerical tools---such as any classical solver relying on the Laplacian form of the diffusive term---unusable. In this context, the literature on numerical methods for incompressible variable-viscosity flows has been steadily expanding. Recent developments include stabilization methods \cite{Barrenechea2023,Barrenechea2024,Galarce2025}, fractional-step schemes \cite{Espinoza2025,El-Amrani2025,ElAmrani2025,Schussnig2026}, efficient iterative algorithms \cite{Deteix2024} and mixed formulations \cite{Anaya2021,Anaya2023,Caucao2025}. 

For variable viscosities $\nu$ with spatial gradients $\nabla\nu \neq \ve{0}$, the viscous term $\nabla\cdot(2\nu\nabla^{\mathrm{s}}\ve{u})$ cannot be reduced to a simple Laplacian. Therefore, until recently, the full stress divergence was commonly regarded as the only consistent form of the diffusive term (expressions such as $\nabla\cdot[\nu\nabla\ve{u}]$ or $\nu\Delta\ve{u}$ have been used \cite{Pyo2007,Schiavazzi2017} but are simply \textsl{inconsistent} with variable $\nu$). Although other forms have recently emerged that correctly generalize the Laplacian \cite{Barrenechea2024,Guesmi2023} and rotational \cite{Anaya2023,Stiller2020} formulations, their numerical and modeling properties are still not fully understood or tested. 

\citet{Zang1991} was one of the first authors to provide a detailed numerical comparison of modified forms of the incompressible Navier--Stokes equations. That same year, \citet{Gresho1991} published his seminal article cataloging several formulations of the viscous and convective terms---considering both viscosity and density as constants. Nearly a decade later, \citet{Guermond2000} extended some of those results to variable density, still assuming constant viscosity. The differences explored in those works include the resulting matrix structure and sparsity pattern, as well as the outflow (Neumann) boundary conditions associated with each variant. As we show in this work, for variable viscosity there are even further differences, such as numerical stability.

In volume-integral methods such as finite elements and finite volumes, Neumann boundary conditions arise naturally from the divergence theorem. Therefore, depending on how viscous and convective terms are treated, different natural outflow boundary conditions arise. That would already be  a complex matter \textsl{if} outflow data were usually available, but, as remarked by \citet{Gresho1991}, \textit{``these BCs are not set by Nature; rather,
they are set by the need to truncate a domain for computational reasons''}. This issue is especially relevant in biomedical applications, where the computational domain typically represents only a truncated portion of the vasculature \cite{romor2026, MELLA2026108145}. Open boundaries can also trigger numerical instabilities, for instance in the presence of strong backflow. Thus, there are many works in the literature addressing outflow BCs and their numerical treatment \cite{Papanastasiou1992,HEYWOOD1996,Barth2007,Dimakopoulos2012,Braack2014,Bertoglio2014,Bertoglio2016,Bertoglio2018}, although only a few consider variable viscosity \cite{Barth2007,Dimakopoulos2012}. 

This article presents, to the best of our knowledge, the first unified and systematic comparison between the stress-divergence form of the viscous term and variable-viscosity generalizations of the Laplacian and rotational variants. We also derive a rotational weak formulation that does not introduce vorticity as an additional unknown, thereby differing from the three-field formulation of \citet{Anaya2023}. The comparison addresses three aspects relevant to time-dependent simulations: algorithmic structure, temporal stability, and natural boundary conditions. Our assessment combines implementation aspects, stability analysis, and numerical examples, providing a comprehensible overview that will hopefully aid CFD experts and computational rheologists in selecting the most appropriate variant for the desired applications. 

The rest of this article is organized as follows. Section~\ref{sec2} introduces the model problem and derives the stress-divergence, generalized Laplacian, and generalized rotational forms, with emphasis on their weak formulations and natural boundary conditions. Section~\ref{sec3} presents the finite element discretization and the IMEX time-integration schemes used for the three formulations. Section~\ref{sec4} provides the temporal stability analysis and compares the resulting estimates. Section~\ref{sec5} reports the numerical experiments, including temporal convergence tests and variable-viscosity flow problems with outflow boundaries. Finally, Section~\ref{sec6} summarizes the main findings and discusses the implications for selecting a viscous formulation in practical computations.

\section{Preliminaries}
\label{sec2}
\subsection{Model problem}
In a finite time interval $(0,T]$ and a domain $\Omega\subset\mathbb{R}^{d}$ with Lipschitz boundary $\Gamma$, we consider the incompressible Navier--Stokes equations:
\begin{align}
\partial_t\ve{u} + \ve{u}\cdot\nabla\ve{u} - \nabla\cdot(2\nu\nabla^{\mathrm{s}}\ve{u}) + \nabla p &= \ve{f} && \text{in} \ \ \Omega\times(0,T]=:Q\, ,\label{momentum}\\
\nabla\cdot\ve{u} &= 0  && \text{in} \ \ \Omega\times(0,T]\, , \label{incompressibility}
\end{align}
equipped with appropriate boundary and initial conditions. The unknowns are the velocity $\ve{u}$ and the pressure $p$, while $\ve{f}$ is a given body force. The viscosity field $\nu(\ve{x},t)$ is assumed to satisfy
\begin{equation}
   0 < \nu_{\mathrm{min}} \leq \nu(\ve{x},t) \leq  \nu_{\mathrm{max}} < \infty \ \ \text{in} \ \bar{Q}\, ,
   \label{nu_lim}
\end{equation}
where $(\nu_{\mathrm{min}},\nu_{\mathrm{max}})$ are known constants, such as the asymptotic bounds of a rheological model. The viscosity may be either a prescribed field $\nu(\ve{x},t)$ or determined by a rheological law $\nu = \eta(\dot{\gamma}(\ve{u}))$, with the shear rate defined as
\begin{align}
   \dot{\gamma}(\ve{u}) :=  \sqrt{2 \nabla^{\mathrm{s}}\ve{u} : \nabla^{\mathrm{s}}\ve{u}}\, .
\end{align}
The non-Newtonian numerical examples in this article assume the shear-thinning Carreau--Yasuda law $\eta:\mathbb{R}^{+}\rightarrow (\nu_{\mathrm{min}},\nu_{\mathrm{max}}]$
\begin{equation}
  \eta(\dot{\gamma}) = \nu_{\mathrm{min}}  + (\nu_{\mathrm{max}} - \nu_{\mathrm{min}})\left[1 + \left(\lambda\dot{\gamma}\right)^a\right]^{\frac{n-1}{a}},\label{CY}
\end{equation}
where $a > 0$, $n\in [0,1)$, $\lambda > 0$ and $\nu_{\mathrm{max}} > \nu_{\mathrm{min}} > 0$ are empirical constants; many other popular models, such as the power-law and Carreau rheologies, are included in \eqref{CY} (e.g., $a=2$ for Carreau).

\subsection{Viscous reformulations}
For constant viscosity, most CFD codes rewrite the viscous term $-\nabla\cdot(2\nu\nabla^{\mathrm{s}}\ve{u})$ in its equivalent Laplacian form $-\nu\Delta\ve{u}$. The main computational advantage is the resulting algebraic structure, since the vector Laplacian acts on each velocity component separately, i.e., $(\Delta\ve{u})_i = \Delta u_i$, with $u_i$ denoting the $i$th spatial component of $\ve{u}$. This leads to block-diagonal velocity-velocity matrices, in contrast to the coupling induced by the symmetric-gradient form. Another (less popular) variant is the rotational form $\nu\nabla\times(\nabla\times\ve{u})$, which is closely related to formulations arising in electromagnetism \cite{Boffi2026}. Apart from algorithmic differences, an important distinction between these three formulations is their natural boundary conditions on outflow boundaries, as will be discussed later on.

For variable viscosity, until recently the stress-divergence form was the only one used in practice, since the plain Laplacian and curl-curl forms are not valid if $\nabla\nu \neq \ve{0}$. When the viscosity field is Lipschitz-continuous in space, however, the Laplacian and rotational forms can be generalized by retaining the corresponding viscosity-gradient contributions. 
Using Eq.~\eqref{incompressibility} and the vector-Laplacian identity $\Delta = \nabla(\nabla\cdot) - \nabla\times(\nabla\times)$, \citet{Anaya2021} proposed the reformulation
\begin{align}
    -\nabla\cdot(2\nu\nabla^{\mathrm{s}}\ve{u}) &= -\nu\nabla\cdot(\nabla\ve{u} + \nabla^{\top}\ve{u}) - 2\nabla^{\mathrm{s}}\ve{u}\nabla\nu \nonumber\\
    &= -\nu[\Delta\ve{u} + \nabla(\nabla\cdot\ve{u})] - 2\nabla^{\mathrm{s}}\ve{u}\nabla\nu\nonumber\\
    &= \nu[\nabla\times(\nabla\times\ve{u}) - 2\nabla(\nabla\cdot\ve{u})] - 2\nabla^{\mathrm{s}}\ve{u}\nabla\nu\nonumber\\
    &= \nu\nabla\times(\nabla\times\ve{u}) - 2\nabla^{\mathrm{s}}\ve{u}\nabla\nu\, ,\label{rot0}
\end{align}
which reduces to the standard curl-curl variant if $\nu$ is constant. The Laplacian form can likewise be generalized as 
\begin{align}
    -\nabla\cdot(2\nu\nabla^{\mathrm{s}}\ve{u}) &= -\nabla\cdot(\nu\nabla\ve{u}) -\nabla\cdot( \nu\nabla^{\top}\ve{u})\nonumber \\
    &= -\nabla\cdot(\nu\nabla\ve{u}) -\nu\nabla( \nabla\cdot\ve{u}) - \nabla^{\top}\ve{u}\nabla\nu\nonumber\\
    &= -\nabla\cdot(\nu\nabla\ve{u}) - \nabla^{\top}\ve{u}\nabla\nu \, .\label{GL0}
\end{align}
Alternative generalizations are also possible, as shown by \citet{Stiller2020}.

\subsection{Weak forms}
Although much of the mathematical literature focuses on homogeneous Dirichlet boundary conditions, the present study also considers problems with outflow boundaries. In particular, we seek weak formulations of the generalized Laplacian and rotational forms whose natural boundary conditions match those of their constant-viscosity counterparts, which requires constructing appropriate variational formulations. 

Let us consider a non-overlapping partition of the boundary $\partial\Omega$ into $\Gamma_N$ and $\Gamma_D$. On the latter, Dirichlet data $\ve{u}_D$ are prescribed for the velocity $\ve{u}$, while the type of data prescribed on $\Gamma_N$ depends on the specific form used for the viscous term. We shall for now consider sufficiently regular test functions $\ve{w}$ such that $\ve{w}|_{\Gamma_D} = \ve{0}$. For the standard stress-divergence formulation, integration by parts yields
\begin{align}
    -\int_{\Omega} \ve{w}\cdot\left[\nabla\cdot(2\nu\nabla^{\mathrm{s}}\ve{u})\right]\mathrm{d}\Omega = \int_{\Omega} 2\nu\nabla^{\mathrm{s}}\ve{u}:\nabla^{\mathrm{s}}\ve{w} - \int_{\Gamma_N}[(2\nu\nabla^{\mathrm{s}}\ve{u})\ve{n}]\cdot\ve{w}\,\mathrm{d}\Gamma\, ,\label{weakSD}
\end{align}
where $\ve{n}$ is the unit outward normal vector on $\Gamma_N$. For the generalized Laplacian (GL) variant, we write 
\begin{align}
    &-\int_{\Omega} \ve{w}\cdot\left[\nabla\cdot(\nu\nabla\ve{u}) + \nabla^{\top}\ve{u}\nabla\nu\right]\mathrm{d}\Omega\nonumber\\ 
    &= \int_{\Omega} \nu\nabla\ve{u}:\nabla\ve{w} - \big(\nabla^{\top}\ve{u}\nabla\nu\big)\cdot\ve{w}\, \mathrm{d}\Omega - \int_{\Gamma_N}[(\nu\nabla\ve{u})\ve{n}]\cdot\ve{w}\,\mathrm{d}\Gamma\, .\label{weakGL}
\end{align}
Concerning the generalized rotational form, so far the literature does not seem to contain weak forms based on integration by parts (\citet{Anaya2021,Anaya2023} introduced instead the vorticity $\ve{\omega} = \nabla\times\ve{u}$ as an additional unknown to circumvent second-order derivatives). Therefore, we construct a new weak form by first integrating the curl-curl term by parts: 
\begin{align}
    \int_{\Omega} \ve{w}\cdot\left[\nu\nabla\times(\nabla\times\ve{u}) - 2\nabla^{\mathrm{s}}\ve{u}\nabla\nu\right]\mathrm{d}\Omega &= \int_{\Omega} (\nabla\times\ve{u})\cdot\nabla\times(\nu\ve{w}) - \big[(\nabla^{\top}\ve{u} + \nabla\ve{u})\nabla\nu\big]\cdot\ve{w}\, \mathrm{d}\Omega\nonumber\\
    &+ \int_{\Gamma_N}\left[\ve{n}\times(\nu\nabla\times\ve{u})\right]\cdot\ve{w}\,\mathrm{d}\Gamma\, .\label{rotDerivation1}
\end{align}
We have that 
\begin{align}
    \int_{\Omega} (\nabla\times\ve{u})\cdot\nabla\times(\nu\ve{w}) \, \mathrm{d}\Omega &= \int_{\Omega} (\nabla\times\ve{u})\cdot(\nu\nabla\times\ve{w} + \nabla\nu\times\ve{w}) \, \mathrm{d}\Omega \nonumber\\
    &=\int_{\Omega} (\nabla\times\ve{u})\cdot(\nu\nabla\times\ve{w})  \, \mathrm{d}\Omega + \int_{\Omega} \left[\big(\nabla\ve{u} - \nabla^{\top}\ve{u}\big) \nabla\nu\right]\cdot\ve{w} \, \mathrm{d}\Omega,\label{rotDerivation2}
\end{align}
thanks to the identity $(\nabla\times\ve{b})\times\ve{a} = (\nabla\ve{b}-\nabla^{\top}\ve{b})\ve{a}$. The terms in Eqs.~\eqref{rotDerivation1} and \eqref{rotDerivation2} containing the product $\nabla\ve{u}\nabla\nu$ cancel each other out, so we are left with the \textsl{generalized rotational} (ROT) term
\begin{align}
\int_{\Omega} (\nabla\times\ve{u})\cdot(\nu\nabla\times\ve{w})  \, \mathrm{d}\Omega -\int_{\Omega}(2\nabla^{\top}\ve{u}\nabla\nu)\cdot\ve{w} -
\int_{\Gamma_N}\left[(\nu\nabla\times\ve{u})\times\ve{n}\right]\cdot\ve{w}\,\mathrm{d}\Gamma\, .\label{weakRot}
\end{align}
To the best of our knowledge, this specific weak formulation has not been reported previously.

Comparing the three weak forms \eqref{weakSD}, \eqref{weakGL}, \eqref{weakRot} reveals that the stress-divergence formulation is the only symmetric one, while the other two formulations contain the term $\nabla^{\top}\ve{u}\nabla\nu$ (which vanishes for constant $\nu$). The boundary data required on $\Gamma_N$ also differ for each formulation: after also integrating the pressure term by parts, the following natural boundary conditions are obtained:
\begin{align*}
    \text{Stress-divergence:}\ \ \ve{t} &= (2\nu\nabla^{\mathrm{s}}\ve{u})\ve{n} - p\ve{n}, && \text{(normal traction)} \\  
     \text{Laplacian:}\ \ \ve{t} &= (\nu\nabla\ve{u})\ve{n} - p\ve{n}, && \text{(normal pseudo-traction)} \\
       \text{Rotational:}\ \ \ve{t} &= (\nu\nabla\times\ve{u})\times\ve{n} - p\ve{n}\, . && 
\end{align*}
Depending on the envisioned application and available data, one or the other type of boundary data may be more advantageous. For example, in fluid-structure interaction it is often necessary to use the \textsl{real} normal traction $(2\nu\nabla^{\mathrm{s}}\ve{u})\ve{n} - p\ve{n}$ for the interface coupling; for flow problems with truncated outlets, which are our focus, pseudo-tractions are usually more suitable, as shown in the numerical examples and already well known for constant viscosity \cite{HEYWOOD1996}. In this work, we limit our discussion to the common ``do-nothing'' setting ($\ve{t}=\ve{0}$ on $\Gamma_N$), which produces a different outflow profile for each formulation.

\section{Discretization} \label{sec3}
For the spatial discretization we use Taylor--Hood elements: second- and first-order interpolation for velocity and pressure, respectively. For the time discretization, we use IMEX schemes that advance the solution without nonlinear iterations by treating both the convective velocity and the viscosity law explicitly. Moreover, the terms proportional to $\nabla^{\top}\ve{u}\nabla\nu$ in the generalized Laplacian and rotational formulations will be treated implicitly because they are generally non-positive and therefore may damage coercivity and stability of the overall bilinear forms. The time derivative is discretized backwardly:
\begin{align*}
    \partial_t\ve{u}|_{t_{n+1}} \approx \frac{3\ve{u}_{n+1} -4\ve{u}_{n} + \ve{u}_{n-1}}{2\tau},
\end{align*}
where $\tau = T/N$ is the (constant) time-step size, $N\in\mathbb{N}$. To match the second-order consistency of this finite difference, we consider the extrapolations
\begin{align}
   \ve{u}^{\star}_{n+1} &:= 2\ve{u}_{n} - \ve{u}_{n-1} 
\end{align}
and
\begin{align}
		\nu^{\star}_{n+1} := 	\begin{cases}
\nu(\ve{x},t_{n+1}),& \text{if $\nu(\ve{x},t)$ is known a priori (data),} 
\\
\eta(\dot{\gamma}(\ve{u}^{\star}_{n+1}))\, ,& \text{if $\nu$ is given through a rheological law $\eta$.}
\end{cases}&& 
\end{align}
In what follows, the $L^2(\Omega)$ product of two functions---be they scalars, vectors or tensors---will be written as $\langle \cdot,\cdot \rangle$. The velocity and pressure finite element spaces will be denoted as $X_h$ and $Y_h$, respectively.

In the stress-divergence formulation, each time step consists in finding $(\ve{u}_{n+1},p_{n+1})\in X_h\times Y_h$, with $\ve{u}_{n+1}|_{\Gamma_D} = \ve{u}_D|_{t_{n+1}}$, such that
\begin{align}
    &\left\langle\frac{3}{2\tau}\ve{u}_{n+1} + \ve{u}_{n+1}^{\star}\cdot\nabla\ve{u}_{n+1},\ve{w}\right\rangle  +\left\langle 2\nu_{n+1}^{\star}\nabla^{\mathrm{s}}\ve{u}_{n+1},\nabla^{\mathrm{s}}\ve{w}\right\rangle - \left\langle p_{n+1},\nabla\cdot\ve{w}\right\rangle + \left\langle \nabla\cdot\ve{u}_{n+1},q\right\rangle\nonumber\\
    &= \left\langle \frac{4\ve{u}_{n} - \ve{u}_{n-1}}{2\tau}+ \ve{f}_{n+1},\ve{w}\right\rangle \label{BDF2SD}
\end{align}
for all $(\ve{w},q)\in X_h\times Y_h$ fulfilling $\ve{v}|_{\Gamma_D} = \ve{0}$. The generalized Laplacian problem looks for $(\ve{u}_{n+1},p_{n+1})\in X_h\times Y_h$, with $\ve{u}_{n+1}|_{\Gamma_D} = \ve{u}_D|_{t_{n+1}}$, such that
\begin{align}
    &\left\langle\frac{3}{2\tau}\ve{u}_{n+1} + \ve{u}_{n+1}^{\star}\cdot\nabla\ve{u}_{n+1},\ve{w}\right\rangle  +\left\langle \nu_{n+1}^{\star}\nabla\ve{u}_{n+1},\nabla\ve{w}\right\rangle - \left\langle p_{n+1},\nabla\cdot\ve{w}\right\rangle + \left\langle \nabla\cdot\ve{u}_{n+1},q\right\rangle\nonumber \\
    &= \left\langle \frac{4\ve{u}_{n} - \ve{u}_{n-1}}{2\tau}+ \ve{f}_{n+1} + \underbrace{2\nabla^{\top}\ve{u}_{n}\nabla\nu_{n} - \nabla^{\top}\ve{u}_{n-1}\nabla\nu_{n-1}}_{\mathcal{O}(\tau^2)\text{-extrapolation of} \ \nabla^{\top}\boldsymbol{u}_{n+1}\nabla\nu_{n+1}},\ve{w}\right\rangle    \label{BDF2GL}
\end{align}
for all $(\ve{w},q)\in X_h\times Y_h$ fulfilling $\ve{v}|_{\Gamma_D} = \ve{0}$.

Pure rotational formulations are generally unstable for Lagrangian elements and require instead Nédélec-type spaces \cite{Boffi2013}. To circumvent this, we follow \citet{Anaya2021} by adding a \textsl{consistent} grad-div stabilization term scaled by a positive constant $\gamma$. We then seek
$(\ve{u}_{n+1},p_{n+1})\in X_h\times Y_h$, with $\ve{u}_{n+1}|_{\Gamma_D} = \ve{u}_D|_{t_{n+1}}$, such that
\begin{align}
    &\left\langle\frac{3}{2\tau}\ve{u}_{n+1} + \ve{u}_{n+1}^{\star}\cdot\nabla\ve{u}_{n+1},\ve{w}\right\rangle  +\left\langle \nu_{n+1}^{\star}\nabla\times\ve{u}_{n+1},\nabla\times\ve{w}\right\rangle + \gamma\left\langle \nu_{n+1}^{\star}\nabla\cdot\ve{u}_{n+1},\nabla\cdot\ve{w}\right\rangle + \nonumber\\
    &\left\langle \nabla\cdot\ve{u}_{n+1},q\right\rangle - \left\langle p_{n+1},\nabla\cdot\ve{w}\right\rangle  
    = \left\langle \frac{4\ve{u}_{n} - \ve{u}_{n-1}}{2\tau}+ \ve{f}_{n+1} + 2(2\nabla^{\top}\ve{u}_{n}\nabla\nu_{n} - \nabla^{\top}\ve{u}_{n-1}\nabla\nu_{n-1}),\ve{w}\right\rangle  \label{BDF2ROT}
\end{align}
for all $(\ve{w},q)\in X_h\times Y_h$ fulfilling $\ve{v}|_{\Gamma_D} = \ve{0}$. Well-posedness of this formulation can be proved by combining the identity $\|\nabla\times\ve{v}\|^2 + \|\nabla\cdot\ve{v}\|^2 = \|\nabla\ve{v}\|^2$ for $\ve{v}\in\ve{H}^1_0(\Omega)$ with the techniques we presented in a recent work \cite{Espinoza2025}.

With the tailored IMEX treatments proposed above for the viscous terms, all three methods lead to linear algebraic problems with symmetric diffusion matrices; the convective contribution remains nonsymmetric in general. The main algorithmic difference is that the generalized Laplacian formulation produces a block-diagonal velocity-velocity matrix with $d$ (2 our 3) identical blocks, which reduces assembly costs and memory requirements \cite{John2016}. This structure is especially advantageous for pressure-segregation schemes, since the resulting algorithm reduces to scalar subproblems (one for each velocity component and one for the pressure \cite{Barrenechea2024,Espinoza2025}).

\section{Temporal stability}\label{sec4}
\subsection{Useful results}
We denote by $\| \cdot \|$ and $\| \cdot \|_{\infty}$ the $L^2(\Omega)$ and $L^{\infty}(\bar{Q})$ norms, respectively, without distinguishing between scalar-, vector- or tensor-valued arguments. For concision, the stability analysis is carried out considering $\ve{f}=\ve{0}$ and $\ve{u}_{n}\in \ve{H}^1_0(\Omega)$ for all $t_n$. More general boundary conditions and forcing terms can be handled by using ideas from our previous work \cite{Barrenechea2024}. The analyses uses the skew-symmetry of the convective term:
\begin{align}
    \left\langle \ve{v}\cdot\nabla\ve{w},\ve{w}\right\rangle = 0 \label{skew} 
\end{align}
for any sufficiently regular $(\ve{w},\ve{v})$ such that $\ve{v}\cdot\ve{n}=0$ on $\partial\Omega$ and $\nabla\cdot\ve{v} = 0$. Also useful is the summation result (for $N\geq 2$)
\begin{align}
    \sum_{n=1}^{N-1}(a_{n+1} - \beta a_{n} - \gamma a_{n-1}) = (\beta + \gamma)(a_N - a_1) + \gamma(a_{N-1}-a_0) + (1-\beta-\gamma)\sum_{n=2}^{N} a_{n}\, ,\label{equality}
\end{align}
along with the identity
\begin{align}
     \langle 3\ve{v}_{n+1}-4\ve{v}_{n}+\ve{v}_{n-1},2\ve{v}_{n+1}\rangle = \|\ve{v}_{n+1}\|^2 - \|\ve{v}_{n}\|^2 + \|\ve{v}_{n+2}^{\star}\|^2 - \|\ve{v}_{n+1}^{\star}\|^2 +\|\delta^2\ve{v}_{n+1}\|^2  ,
    \label{identityBDF2}
\end{align}
where $\ve{v}_{n+1}^{\star} = 2\ve{v}_{n}-\ve{v}_{n-1}$ and
\begin{align*}
\delta^2\ve{v}_{n+1} :=&\ \ve{v}_{n+1}-2\ve{v}_{n}+\ve{v}_{n-1} \\
 =& \ \ve{v}_{n+1} - \ve{v}_{n+1}^{\star} \, .
\end{align*}
The stability analysis will also rely on the following Grönwall lemma \cite{Heywood1990}.
\begin{lemma}[Discrete Grönwall inequality]\label{Lem:Gronwall-unconditional}
Let $N\in\mathbb{N}$, and $\alpha,C,a_{n},b_{n}$ be non-negative numbers for $n=1,\ldots,N$. If these numbers satisfy
\begin{align}\label{N-1}
    a_{N} + \sum_{n=1}^{N}b_n \leq C +  \alpha\sum_{n=1}^{N-1}a_n  \, ,
\end{align}
then there holds
\begin{align}
    a_{N} + \sum_{n=1}^{N}b_n \leq C\mathrm{e}^{\alpha N} \ \ \text{for} \ \ N\geq 1 \, .
\end{align}
\end{lemma}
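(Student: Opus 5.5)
We argue by induction on $N$, which is natural because the hypothesis \eqref{N-1} involves only $a_1,\dots,a_{N-1}$ on the right-hand side. We read the statement, as in \cite{Heywood1990}, as saying that if \eqref{N-1} holds for every $N\geq 1$ (with the same $C$ and $\alpha$), then the conclusion holds for every $N\geq 1$. Since all $b_n$ are non-negative, \eqref{N-1} implies in particular $a_m\leq C+\alpha\sum_{n=1}^{m-1}a_n$ for every $m$. The idea is to bound the partial sums $S_m:=\sum_{n=1}^{m}a_n$ geometrically and then insert this bound back into \eqref{N-1}.

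\textbf{Step 1 (base case).} For $N=1$ the sum on the right is empty, so \eqref{N-1} reads $a_1+b_1\leq C\leq C\mathrm{e}^{\alpha}$.

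\textbf{Step 2 (bound on partial sums).} We show $S_m\leq \frac{C}{\alpha}\big((1+\alpha)^m-1\big)$ for $\alpha>0$; the case $\alpha=0$ is immediate from \eqref{N-1}. From $a_m\leq C+\alpha S_{m-1}$ we get
\begin{align*}
S_m = S_{m-1}+a_m\leq C+(1+\alpha)S_{m-1}.
\end{align*}
Iterating this from $S_0=0$ gives $S_m\leq C\sum_{k=0}^{m-1}(1+\alpha)^k$, which is the claimed geometric bound.

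\textbf{Step 3 (conclusion).} Substituting the bound for $S_{N-1}$ into \eqref{N-1} yields
\begin{align*}
a_N+\sum_{n=1}^N b_n\leq C+C\big((1+\alpha)^{N-1}-1\big)=C(1+\alpha)^{N-1}\leq C\mathrm{e}^{\alpha(N-1)}\leq C\mathrm{e}^{\alpha N},
\end{align*}
where we used $1+\alpha\leq \mathrm{e}^{\alpha}$. This gives the stated estimate, and in fact the slightly sharper factor $\mathrm{e}^{\alpha(N-1)}$.

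\textbf{Main obstacle.} The only delicate point is interpreting the quantifiers correctly. The recursion in Step 2 requires \eqref{N-1} at every intermediate index $m\leq N$, not only at the final $N$. This is exactly how the lemma will be applied later, because the energy inequalities hold for each time level. Once this is fixed, the rest is elementary.
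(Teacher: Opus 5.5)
Your proof is correct. The paper gives no proof of this lemma and only cites \cite{Heywood1990}, so your argument supplies the missing one by the standard elementary route. The key step is the one-step amplification $S_m\le C+(1+\alpha)S_{m-1}$; equivalently, the right-hand side $R_m:=C+\alpha S_{m-1}$ satisfies $R_{m+1}=R_m+\alpha a_m\le(1+\alpha)R_m$. Combined with $1+\alpha\le\mathrm{e}^{\alpha}$, this even gives the slightly sharper factor $\mathrm{e}^{\alpha(N-1)}$.

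Your remark about the quantifiers is a real correction, not just a choice of reading. If the hypothesis is imposed only at one fixed $N$, the statement is false. Take $N=2$, $\alpha=C=1$, $b_1=b_2=0$, $a_1=100$, $a_2=101$: then $a_2\le C+\alpha a_1$ holds, but $a_2>\mathrm{e}^{2}$. So the hypothesis must be assumed at every index $m\le N$, exactly as you do.

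This is also how the lemma is used in the paper. The summed energy inequalities for the GL and ROT schemes are derived for arbitrary $N\ge 2$. At $N=1$ they hold trivially, because $a_1=C$ by the definitions of $a_N$ and $C$ given there (with $b_1=0$).
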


\begin{remark}
    BDF2 schemes are not self-starting because the first time step would require a ``pre-initial'' condition $\ve{u}_{-1}$. Therefore, the first step is usually computed using a corresponding BDF1 method. To avoid unnecessary technicalities, we assume that $\ve{u}_1$ is known and bounded, so that BDF2 can be used from the second step onward. 
\end{remark}

\subsection{Stress-divergence formulation}
The stability of the stress-divergence method is a classical result, but we state it here to enable a comparison with the other two methods.
\begin{theorem}[Stability of the linearized stress-divergence scheme]
For any time-step size $\tau=T/N$, $N\ge 2$, scheme \eqref{BDF2SD} satisfies the stability estimate
    \begin{equation}
    \begin{split}      
    &\|\ve{u}^{\star}_{N+1}\|^2 + \|\ve{u}_{N}\|^2   + \sum_{n=2}^{N}\left(8\tau\big\|\sqrt{\nu_{n}^{\star}}\,\nabla^{\mathrm{s}}\ve{u}_{n}\big\|^2 + \|\delta^2\ve{u}_{n}\|^2\right) \leq \|\ve{u}^{\star}_{2}\|^2 + \|\ve{u}_{1}\|^2\, .
     \label{stabilitySD}
     \end{split}
    \end{equation}
\end{theorem}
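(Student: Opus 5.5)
Test \eqref{BDF2SD} with $\ve{w}=4\tau\ve{u}_{n+1}$ and $q=4\tau p_{n+1}$, taking $\ve{f}=\ve{0}$ and $\ve{u}_{n+1}\in\ve{H}^1_0(\Omega)$ as stipulated. With this choice the two pressure terms cancel exactly. The time-derivative contribution becomes
\[
\langle 3\ve{u}_{n+1}-4\ve{u}_n+\ve{u}_{n-1},2\ve{u}_{n+1}\rangle ,
\]
which by the identity \eqref{identityBDF2} equals the telescoping quantity
\[
\|\ve{u}_{n+1}\|^2-\|\ve{u}_n\|^2+\|\ve{u}^\star_{n+2}\|^2-\|\ve{u}^\star_{n+1}\|^2+\|\delta^2\ve{u}_{n+1}\|^2 .
\]
The viscous term gives $8\tau\|\sqrt{\nu^\star_{n+1}}\,\nabla^{\mathrm{s}}\ve{u}_{n+1}\|^2$. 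Here the factor $\nu^\star_{n+1}$ is a fixed, positive, bounded weight by \eqref{nu_lim}, because it is evaluated explicitly. This is exactly why the IMEX treatment creates no nonlinearity in the estimate.

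The step needing care is the convective term $\langle \ve{u}^\star_{n+1}\cdot\nabla\ve{u}_{n+1},\ve{u}_{n+1}\rangle$. I would invoke the skew-symmetry \eqref{skew} with $\ve{v}=\ve{u}^\star_{n+1}$. This requires $\ve{v}\cdot\ve{n}=0$, which holds since $\ve{u}_n,\ve{u}_{n-1}\in\ve{H}^1_0$. It also requires $\nabla\cdot\ve{v}=0$. At the discrete level Taylor--Hood velocities are only weakly divergence-free, so I would argue in either of two ways. One option is to read the convective form as the skew-symmetrized trilinear form, as is standard. The other is to use the idealized assumption of the analysis, as the paper apparently does, that $\nabla\cdot\ve{u}^\star_{n+1}=2\nabla\cdot\ve{u}_n-\nabla\cdot\ve{u}_{n-1}=0$ whenever each iterate is divergence-free. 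This is the main obstacle, and I would resolve it by stating that convention explicitly. With it, the convective term vanishes and there is no CFL condition.

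Summing the resulting per-step equality from $n=1$ to $N-1$ is straightforward. The $\|\ve{u}\|^2$ and $\|\ve{u}^\star\|^2$ differences telescope to
\[
\|\ve{u}_N\|^2-\|\ve{u}_1\|^2+\|\ve{u}^\star_{N+1}\|^2-\|\ve{u}^\star_2\|^2 .
\]
The dissipative terms become
\[
\sum_{n=2}^N\bigl(8\tau\|\sqrt{\nu^\star_n}\,\nabla^{\mathrm{s}}\ve{u}_n\|^2+\|\delta^2\ve{u}_n\|^2\bigr).
\]
Rearranging gives \eqref{stabilitySD} with equality when $\ve{f}=\ve{0}$, hence in particular the inequality. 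Neither the Grönwall Lemma~\ref{Lem:Gronwall-unconditional} nor the summation formula \eqref{equality} is needed here, since no term has to be absorbed. Those tools will presumably enter only for the GL and ROT schemes, where the explicit $\nabla^{\top}\ve{u}\nabla\nu$ terms must be bounded.
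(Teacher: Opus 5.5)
Your proposal is correct and follows the paper's proof essentially verbatim: test with $4\tau\ve{u}_{n+1}$ (and $q=4\tau p_{n+1}$), apply \eqref{skew} and \eqref{identityBDF2}, then telescope from $n=1$ to $N-1$, which in fact yields equality. Your caveat is fair and is a point the paper's proof passes over silently: \eqref{skew} needs $\nabla\cdot\ve{u}^\star_{n+1}=0$, which Taylor--Hood velocities satisfy only weakly, so either a skew-symmetrized convective form or an idealized divergence-free assumption is needed.
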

\proof{Setting $\ve{v}=4\tau\ve{u}_{n+1}$ in the weak form \eqref{BDF2SD} and using identities \eqref{skew} and \eqref{identityBDF2} yields
\begin{align}
(\|\ve{u}_{n+1}\|^2 + \|\ve{u}_{n+2}^{\star}\|^2) - (\|\ve{u}_{n}\|^2  + \|\ve{u}_{n+1}^{\star}\|^2) +\|\delta^2\ve{u}_{n+1}\|^2   + 8\tau\left\|\sqrt{\nu_{n+1}^{\star}}\,\nabla^{\mathrm{s}}\ve{u}_{n+1}\right\|^2 = 0 \,.\label{a}
\end{align}
Adding up from $n=1$ to $n=N-1$ proves the energy-decay property, since the bracketed terms in \eqref{a} telescope in time.}

\subsection{Generalized Laplacian formulation}
\begin{theorem}[Stability of the generalized Laplacian IMEX scheme]
For any time-step size $\tau=T/N$, $N\ge 2$, the IMEX scheme \eqref{BDF2GL} satisfies the stability estimate
    \begin{equation}
    \begin{split}      
    &\big\|\ve{u}_{N}\big\|^2  + \big\|\ve{u}_{N+1}^{\star}\big\|^2 + 8\varepsilon\tau\nu_{\mathrm{min}}\left(5\big\|\sqrt{\nu^{\star}_N}\,\nabla\ve{u}_{N}\big\|^2 + \big\|\sqrt{\nu^{\star}_{N-1}}\,\nabla\ve{u}_{N-1}\big\|^2\right)  \\
    &+ 4(1-10\varepsilon)\tau\sum_{n=2}^{N}\big\|\sqrt{\nu^{\star}_{n}}\,\nabla\ve{u}_{n}\big\|^2  \leq C\, \mathrm{exp}\left(\frac{\|\nabla\nu \|_{\infty}^2 T}{\varepsilon\nu_{\mathrm{min}}}\right)
     \, ,\label{stabilityGL}
     \end{split}
    \end{equation}
for any $\varepsilon \in \left(0,\frac{1}{10}\right]$, where $C = \|\ve{u}_{2}^{\star}\|^2 + \|\ve{u}_{1}\|^2  + 40\tau\varepsilon\big\|\sqrt{\nu^{\star}_{1}}\,\nabla\ve{u}_{1}\big\|^2 + 8\varepsilon\tau \big\|\sqrt{\nu^{\star}_{0}}\,\nabla\ve{u}_{0}\big\|^2$.
\end{theorem}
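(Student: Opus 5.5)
We test \eqref{BDF2GL} with $\ve{w}=4\tau\ve{u}_{n+1}$ and $q=p_{n+1}$, exactly as in the stress-divergence proof. The pressure terms cancel, and the convective term vanishes by \eqref{skew}. Identity \eqref{identityBDF2} turns the time derivative into telescoping differences plus $\|\delta^2\ve{u}_{n+1}\|^2$. The diffusive term gives $4\tau\|\sqrt{\nu^\star_{n+1}}\nabla\ve{u}_{n+1}\|^2$. The only difference from \eqref{a} is the explicit right-hand side
\[
4\tau\big\langle 2\nabla^{\top}\ve{u}_{n}\nabla\nu_{n} - \nabla^{\top}\ve{u}_{n-1}\nabla\nu_{n-1},\ve{u}_{n+1}\big\rangle .
\]

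We bound this by Cauchy--Schwarz and $\|\nabla\nu\|_\infty$. It is at most
\[
4\tau\|\nabla\nu\|_\infty\big(2\|\nabla\ve{u}_n\|+\|\nabla\ve{u}_{n-1}\|\big)\|\ve{u}_{n+1}\|.
\]
Young's inequality with weight $\varepsilon\nu_{\min}$ splits each product into a gradient part and an $L^2$ part. We choose the weights so that the gradient parts are $\varepsilon\tau\nu_{\min}$ times $\|\nabla\ve{u}_n\|^2$ and $\|\nabla\ve{u}_{n-1}\|^2$, with coefficients $32$ and $8$ respectively. The $L^2$ part is $\tfrac{\tau\|\nabla\nu\|_\infty^2}{\varepsilon\nu_{\min}}\|\ve{u}_{n+1}\|^2$, up to a harmless constant. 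We then convert gradient norms to weighted ones via $\nu_{\min}\|\nabla\ve{v}\|^2\le\|\sqrt{\nu^\star}\nabla\ve{v}\|^2$ from \eqref{nu_lim}. This produces the factors $\nu_{\min}$ and the weights $5$ and $1$ seen in \eqref{stabilityGL}: $32+8=40=4\cdot10\varepsilon$, which explains the $1-10\varepsilon$.

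Next we sum from $n=1$ to $N-1$. The bracketed energies telescope. The lagged gradient terms $\|\nabla\ve{u}_n\|^2$ and $\|\nabla\ve{u}_{n-1}\|^2$ are reindexed using the summation identity \eqref{equality} or plain index shifts. Most of them are absorbed into $4\tau\sum_{n=2}^N\|\sqrt{\nu^\star_n}\nabla\ve{u}_n\|^2$, leaving the factor $4(1-10\varepsilon)$, which is why $\varepsilon\le 1/10$. What remains is boundary contributions at the initial indices $n=0,1$, which go into $C$, and at the final indices $N,N-1$. The final-index terms are kept on the left as the extra $8\varepsilon\tau\nu_{\min}(5\|\cdot\|^2+\|\cdot\|^2)$ terms, a discrete-energy-type rearrangement arranged to make the inequality fit the Grönwall template.

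Finally, we bound $\|\ve{u}_{n+1}\|^2$ by the energy $a_{n+1}:=\|\ve{u}_{n+1}\|^2+\|\ve{u}^\star_{n+2}\|^2+\dots$. The sum of these $L^2$ terms has the shape $\alpha\sum a_n$ with $\alpha=\tau\|\nabla\nu\|_\infty^2/(\varepsilon\nu_{\min})$. The top-index term $\|\ve{u}_N\|^2$ is the main obstacle, because Lemma~\ref{Lem:Gronwall-unconditional} requires the sum to stop at $N-1$. My first attempt is to keep that top term inside the final-step gradient quantities via Poincaré, or to absorb it using $\tau$ small. The statement claims no restriction on $\tau$, however. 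So I would instead split the Young inequality differently, pairing $\ve{u}_{n+1}$ with $\ve{u}_{n+1}^\star$ and $\delta^2\ve{u}_{n+1}$. The $\delta^2$ piece would be absorbed by $\|\delta^2\ve{u}_{n+1}\|^2$ and the extrapolated piece would be controlled by earlier energies. Once the inequality has the form \eqref{N-1}, Lemma~\ref{Lem:Gronwall-unconditional} gives the factor $\exp(\alpha N)=\exp(\|\nabla\nu\|_\infty^2T/(\varepsilon\nu_{\min}))$, which is \eqref{stabilityGL}.
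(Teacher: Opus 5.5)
Your final paragraph arrives at the paper's key device: write $\ve{u}_{n+1}=\ve{u}^{\star}_{n+1}+\delta^2\ve{u}_{n+1}$ before applying Young's inequality. The $L^2$ remainder is then $\frac{\tau\|\nabla\nu\|_\infty^2}{\varepsilon\nu_{\mathrm{min}}}\|\ve{u}^{\star}_{n+1}\|^2$, which belongs to $a_n$ with $n\le N-1$. Without the split it would be $\|\ve{u}_{n+1}\|^2$, which for $n=N-1$ is the top-index term Lemma~\ref{Lem:Gronwall-unconditional} cannot accept. Most of what you work out carries over to this splitting: the $\ve{u}^{\star}_{n+1}$ half of Young reproduces your $32\varepsilon$ and $8\varepsilon$ gradient terms and the $1-10\varepsilon$ bookkeeping via \eqref{equality}. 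The $\delta^2$ half, however, produces a new term that you never account for, and that term is exactly where a time-step restriction can re-enter.

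Absorbing $8\tau\|\nabla\nu\|_\infty\|\nabla\ve{u}_n\|\,\|\delta^2\ve{u}_{n+1}\|$ into $\frac12\|\delta^2\ve{u}_{n+1}\|^2$ leaves the remainder $32\tau^2\frac{\|\nabla\nu\|_\infty^2}{\nu_{\mathrm{min}}}\|\sqrt{\nu^{\star}_{n}}\,\nabla\ve{u}_{n}\|^2$. The lagged term adds $8\tau^2\frac{\|\nabla\nu\|_\infty^2}{\nu_{\mathrm{min}}}\|\sqrt{\nu^{\star}_{n-1}}\,\nabla\ve{u}_{n-1}\|^2$. These coefficients are not $\varepsilon$-small, so absorbing them into the dissipation $4\tau(1-10\varepsilon)\|\sqrt{\nu^{\star}_{n}}\,\nabla\ve{u}_{n}\|^2$, as you do for the other gradient terms, would require $\tau\|\nabla\nu\|_\infty^2/\nu_{\mathrm{min}}$ to be small. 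That is precisely the restriction the theorem avoids.

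The paper instead writes these terms as $\frac{\tau\|\nabla\nu\|_\infty^2}{\varepsilon\nu_{\mathrm{min}}}\big(32\varepsilon\tau\|\sqrt{\nu^{\star}_{n}}\,\nabla\ve{u}_{n}\|^2+8\varepsilon\tau\|\sqrt{\nu^{\star}_{n-1}}\,\nabla\ve{u}_{n-1}\|^2\big)$ and sends them into the Grönwall sum. This closes only because $a_n$ contains $40\varepsilon\tau\|\sqrt{\nu^{\star}_{n}}\,\nabla\ve{u}_{n}\|^2+8\varepsilon\tau\|\sqrt{\nu^{\star}_{n-1}}\,\nabla\ve{u}_{n-1}\|^2$, together with the checks $32\le 40$ and $8\le 8$. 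These are exactly the final-index gradient terms that \eqref{equality} leaves on the left-hand side. So those retained terms are not a cosmetic rearrangement to fit the Grönwall template; they are what makes the estimate unconditional in $\tau$, and your proof must state and verify this.

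A smaller point: converting via $\nu_{\mathrm{min}}\|\nabla\ve{v}\|^2\le\|\sqrt{\nu^{\star}}\,\nabla\ve{v}\|^2$ removes the factor $\nu_{\mathrm{min}}$ rather than producing it. The paper's argument yields $8\varepsilon\tau\big(5\|\sqrt{\nu^{\star}_N}\,\nabla\ve{u}_{N}\|^2+\|\sqrt{\nu^{\star}_{N-1}}\,\nabla\ve{u}_{N-1}\|^2\big)$ with no $\nu_{\mathrm{min}}$ (compare its $a_N$). The $\nu_{\mathrm{min}}$ in the displayed statement is therefore not something your derivation should try to explain.
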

\proof{We begin by setting $\ve{v}=4\tau\ve{u}_{n+1}$ in \eqref{BDF2GL}, which yields
\begin{align}
 &\left(\|\ve{u}_{n+1}\|^2 + \|\ve{u}_{n+2}^{\star}\|^2\right) - \left(\|\ve{u}_{n}\|^2  + \|\ve{u}_{n+1}^{\star}\|^2\right) +\|\delta^2\ve{u}_{n+1}\|^2   + 4\tau\big\|\sqrt{\nu^{\star}_{n+1}}\,\nabla\ve{u}_{n+1}\big\|^2 \nonumber\\                     
 &= 4\tau\left\langle 2\nabla^{\top}\ve{u}_{n}\nabla\nu_{n} -\nabla^{\top}\ve{u}_{n-1}\nabla\nu_{n-1},\ve{u}_{n+1}\right\rangle\nonumber\\
 &= 4\tau\left\langle 2\nabla^{\top}\ve{u}_{n}\nabla\nu_{n} -\nabla^{\top}\ve{u}_{n-1}\nabla\nu_{n-1},\ve{u}^{\star}_{n+1} + \delta^2\ve{u}_{n+1}\right\rangle\nonumber\\
 &\leq 4\tau\|\nabla\nu\|_{\infty}\left(2\|\nabla^{\top}\ve{u}_{n}\| + \|\nabla^{\top}\ve{u}_{n-1}\|\right)\left(\|\ve{u}^{\star}_{n+1}\| + \|\delta^2\ve{u}_{n+1}\|\right)\nonumber\\
 &\leq 4\tau\frac{\|\nabla\nu\|_{\infty}}{\sqrt{\nu_{\mathrm{min}}}}\left(2\left\|\sqrt{\nu^{\star}_{n}}\,\nabla\ve{u}_{n}\right\| + \left\|\sqrt{\nu^{\star}_{n-1}}\,\nabla\ve{u}_{n-1}\right\|\right)\left(\left\|\ve{u}^{\star}_{n+1}\right\| + \left\|\delta^2\ve{u}_{n+1}\right\|\right),\label{estGL1}
\end{align}
where we have used Hölder's inequality. Now, Young's inequality gives us
\begin{align}
    4\tau\frac{\|\nabla\nu\|_{\infty}}{\sqrt{\nu_{\mathrm{min}}}}2\left\|\sqrt{\nu^{\star}_{n}}\,\nabla\ve{u}_{n}\right\|\left\|\delta^2\ve{u}_{n+1}\right\| &\leq \frac{\left\|\delta^2\ve{u}_{n+1}\right\|^2}{2} + 32\tau^2\frac{\|\nabla\nu\|_{\infty}^2}{\nu_{\mathrm{min}}}\|\sqrt{\nu^{\star}_{n}}\,\nabla\ve{u}_{n}\|^2 ,\\
    4\tau\frac{\|\nabla\nu\|_{\infty}}{\sqrt{\nu_{\mathrm{min}}}}\|\sqrt{\nu^{\star}_{n-1}}\,\nabla\ve{u}_{n-1}\|\left\|\delta^2\ve{u}_{n+1}\right\| &\leq \frac{\left\|\delta^2\ve{u}_{n+1}\right\|^2}{2} + 8\tau^2\frac{\|\nabla\nu\|_{\infty}^2}{\nu_{\mathrm{min}}}\|\sqrt{\nu^{\star}_{n-1}}\,\nabla\ve{u}_{n-1}\|^2 ,\\
    4\tau\frac{\|\nabla\nu\|_{\infty}}{\sqrt{\nu_{\mathrm{min}}}}2\left\|\sqrt{\nu^{\star}_{n}}\,\nabla\ve{u}_{n}\right\|\left\|\ve{u}_{n+1}^{\star}\right\| &\leq \frac{\tau\|\nabla\nu\|_{\infty}^2}{2\varepsilon\nu_{\mathrm{min}}}\left\|\ve{u}_{n+1}^{\star}\right\|^2 + 32\varepsilon\tau\|\sqrt{\nu^{\star}_{n}}\,\nabla\ve{u}_{n}\|^2 ,\\
    4\tau\frac{\|\nabla\nu\|_{\infty}}{\sqrt{\nu_{\mathrm{min}}}}\left\|\sqrt{\nu^{\star}_{n-1}}\,\nabla\ve{u}_{n-1}\right\|\left\|\ve{u}_{n+1}^{\star}\right\| &\leq \frac{\tau\|\nabla\nu\|_{\infty}^2}{2\varepsilon\nu_{\mathrm{min}}}\left\|\ve{u}_{n+1}^{\star}\right\|^2 + 8\varepsilon\tau\|\sqrt{\nu^{\star}_{n-1}}\,\nabla\ve{u}_{n-1}\|^2 , \label{estGL2}
\end{align}
for an arbitrary $\varepsilon > 0$. Combining estimates \eqref{estGL1}--\eqref{estGL2} yields
\begin{align*}
    &\left(\|\ve{u}_{n+1}\|^2 + \|\ve{u}_{n+2}^{\star}\|^2\right) - \left(\|\ve{u}_{n}\|^2  + \|\ve{u}_{n+1}^{\star}\|^2\right)   + 4\tau\big\|\sqrt{\nu^{\star}_{n+1}}\,\nabla\ve{u}_{n+1}\big\|^2
    \leq  \\ 
    &\frac{\tau\|\nabla\nu\|_{\infty}^2}{\nu_{\mathrm{min}}}\left(32\tau\|\sqrt{\nu^{\star}_{n}}\,\nabla\ve{u}_{n}\|^2 + 8\tau\|\sqrt{\nu^{\star}_{n-1}}\,\nabla\ve{u}_{n-1}\|^2 + \frac{1}{\varepsilon}\left\|\ve{u}_{n+1}^{\star}\right\|^2\right) \\ 
    &+ 8\varepsilon\tau(4\|\sqrt{\nu^{\star}_{n}}\,\nabla\ve{u}_{n}\|^2 + \|\sqrt{\nu^{\star}_{n-1}}\,\nabla\ve{u}_{n-1}\|^2)\, ,
\end{align*}
which we sum up from $n=1$ to $n=N-1$:
\begin{align*}
    &\|\ve{u}_{N}\|^2 + \|\ve{u}_{N+1}^{\star}\|^2 + 4\tau\sum_{n=1}^{N-1}\left(\big\|\sqrt{\nu^{\star}_{n+1}}\,\nabla\ve{u}_{n+1}\big\|^2 - 8\varepsilon\|\sqrt{\nu^{\star}_{n}}\,\nabla\ve{u}_{n}\|^2 - 2\varepsilon\|\sqrt{\nu^{\star}_{n-1}}\,\nabla\ve{u}_{n-1}\|^2\right)
      \\ &\leq \|\ve{u}_{1}\|^2  + \|\ve{u}_{2}^{\star}\|^2 + 
    \frac{\tau\|\nabla\nu\|_{\infty}^2}{\nu_{\mathrm{min}}}\sum_{n=1}^{N-1}\left(32\tau\|\sqrt{\nu^{\star}_{n}}\,\nabla\ve{u}_{n}\|^2 + 8\tau\|\sqrt{\nu^{\star}_{n-1}}\,\nabla\ve{u}_{n-1}\|^2 + \frac{1}{\varepsilon}\left\|\ve{u}_{n+1}^{\star}\right\|^2\right) .
\end{align*}
We rewrite the summation on the left-hand side using \eqref{equality}:
\begin{align*}
    &\sum_{n=1}^{N-1}\left(\big\|\sqrt{\nu^{\star}_{n+1}}\,\nabla\ve{u}_{n+1}\big\|^2 - 8\varepsilon\big\|\sqrt{\nu^{\star}_{n}}\,\nabla\ve{u}_{n}\big\|^2 - 2\varepsilon\big\|\sqrt{\nu^{\star}_{n-1}}\,\nabla\ve{u}_{n-1}\big\|^2\right)= \\
&10\varepsilon\left(\big\|\sqrt{\nu^{\star}_{N}}\,\nabla\ve{u}_{N}\big\|^2 - \big\|\sqrt{\nu^{\star}_{1}}\,\nabla\ve{u}_{1}\big\|^2\right) + 2\varepsilon\left(\big\|\sqrt{\nu^{\star}_{N-1}}\,\nabla\ve{u}_{N-1}\big\|^2 - \big\|\sqrt{\nu^{\star}_{0}}\,\nabla\ve{u}_{0}\big\|^2\right) \\
&+ (1-10\varepsilon)\sum_{n=2}^{N}\big\|\sqrt{\nu^{\star}_{n}}\,\nabla\ve{u}_{n}\big\|^2\, ,
\end{align*}
so that the previous estimate becomes
\begin{align*}
    & a_N + 4\tau(1-10\varepsilon)\sum_{n=2}^{N}\big\|\sqrt{\nu^{\star}_{n}}\,\nabla\ve{u}_{n}\big\|^2
      \\ &\leq C + 
    \frac{\tau\|\nabla\nu\|_{\infty}^2}{\varepsilon\nu_{\mathrm{min}}}\sum_{n=1}^{N-1}\left(\left\|\ve{u}_{n+1}^{\star}\right\|^2 + 32\varepsilon\tau\|\sqrt{\nu^{\star}_{n}}\,\nabla\ve{u}_{n}\|^2 + 8\varepsilon\tau\|\sqrt{\nu^{\star}_{n-1}}\,\nabla\ve{u}_{n-1}\|^2 \right)\\
    &\leq C + 
    \frac{\tau\|\nabla\nu\|_{\infty}^2}{\varepsilon\nu_{\mathrm{min}}}\sum_{n=1}^{N-1}\left(\left\|\ve{u}_{n+1}^{\star}\right\|^2 +\left\|\ve{u}_{n}\right\|^2 + 40\varepsilon\tau\|\sqrt{\nu^{\star}_{n}}\,\nabla\ve{u}_{n}\|^2 + 8\varepsilon\tau\|\sqrt{\nu^{\star}_{n-1}}\,\nabla\ve{u}_{n-1}\|^2 \right)\\
    &= C + 
    \frac{\tau\|\nabla\nu\|_{\infty}^2}{\varepsilon\nu_{\mathrm{min}}}\sum_{n=1}^{N-1} a_n \, ,
\end{align*}
where $a_N = \|\ve{u}_{N+1}^{\star}\|^2 + \|\ve{u}_{N}\|^2 + 40\varepsilon\tau\big\|\sqrt{\nu^{\star}_N}\,\nabla\ve{u}_N\big\|^2 + 8\varepsilon\tau\big\|\sqrt{\nu^{\star}_{N-1}}\,\nabla\ve{u}_{N-1}
    \big\|^2$ and $C = \|\ve{u}_{2}^{\star}\|^2 + \|\ve{u}_{1}\|^2  + 40\tau\varepsilon\big\|\sqrt{\nu^{\star}_{1}}\,\nabla\ve{u}_{1}\big\|^2 + 8\varepsilon\tau \big\|\sqrt{\nu^{\star}_{0}}\,\nabla\ve{u}_{0}\big\|^2$. Therefore, for $\varepsilon \leq 1/10$ the Grönwall Lemma \ref{Lem:Gronwall-unconditional} closes the proof.
}

\subsection{Generalized rotational formulation}
\begin{theorem}[Stability of the generalized rotational IMEX scheme]
For any time-step size $\tau=T/N$, $N\ge 2$, the IMEX scheme \eqref{BDF2ROT} satisfies the stability estimate
    \begin{equation}
    \begin{split}      
    &\big\|\ve{u}_{N}\big\|^2  + \big\|\ve{u}_{N+1}^{\star}\big\|^2 + 8\varepsilon\tilde{\gamma}\tau\nu_{\mathrm{min}}\left(5\big\|\nabla\ve{u}_{N}\big\|^2 + \big\|\nabla\ve{u}_{N-1}\big\|^2\right)  \\
    &+ 4(1-40\varepsilon)\tilde{\gamma}\tau\sum_{n=2}^{N}\nu_{\mathrm{min}}\big\|\nabla\ve{u}_{n}\big\|^2  \leq C\mathrm{exp}\left(\frac{\|\nabla\nu \|_{\infty}^2 T}{\varepsilon\tilde{\gamma}\nu_{\mathrm{min}}}\right)
     \, ,\label{stabilityROT}
     \end{split}
    \end{equation}
for any $\varepsilon \in \left(0,\frac{1}{40}\right]$, where $C = \|\ve{u}_{2}^{\star}\|^2 + \|\ve{u}_{1}\|^2  + 32\varepsilon\tilde{\gamma}\tau\nu_{\mathrm{\min}}(5\|\nabla\ve{u}_1\|^2 + \|\nabla\ve{u}_{0}\|^2)$ and
\begin{align*}
    \tilde{\gamma} := \mathrm{min}\lbrace\gamma,1\rbrace\, .
\end{align*}
\end{theorem}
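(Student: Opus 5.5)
The plan mirrors the proof of the generalized Laplacian estimate \eqref{stabilityGL}, with two changes: the diffusive lower bound comes from the curl and grad-div terms, and the explicit right-hand side carries a factor $2$.

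\textbf{Energy identity and coercivity.} First I test \eqref{BDF2ROT} with $\ve{w}=4\tau\ve{u}_{n+1}$ and $q=4\tau p_{n+1}$, so that the pressure terms cancel. The skew-symmetry \eqref{skew} removes the convective term, and \eqref{identityBDF2} turns the time derivative into the same telescoping bracket plus $\|\delta^2\ve{u}_{n+1}\|^2$. The diffusive contribution is
\[
4\tau\left(\big\|\sqrt{\nu^{\star}_{n+1}}\,\nabla\times\ve{u}_{n+1}\big\|^2+\gamma\big\|\sqrt{\nu^{\star}_{n+1}}\,\nabla\cdot\ve{u}_{n+1}\big\|^2\right).
\]
I bound it below using $\nu^{\star}_{n+1}\ge\nu_{\mathrm{min}}$ from \eqref{nu_lim}, then $\min\{1,\gamma\}=\tilde\gamma$, and finally the $\ve{H}^1_0$ identity $\|\nabla\times\ve{v}\|^2+\|\nabla\cdot\ve{v}\|^2=\|\nabla\ve{v}\|^2$. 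This yields the lower bound $4\tau\tilde\gamma\nu_{\mathrm{min}}\|\nabla\ve{u}_{n+1}\|^2$. Because of this step, everything below is measured in the unweighted quantity $\nu_{\mathrm{min}}\|\nabla\ve{u}\|^2$ rather than in the weighted norms used for the Laplacian scheme.

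\textbf{Explicit term.} The right-hand side is $8\tau\langle 2\nabla^{\top}\ve{u}_{n}\nabla\nu_{n}-\nabla^{\top}\ve{u}_{n-1}\nabla\nu_{n-1},\ve{u}^{\star}_{n+1}+\delta^2\ve{u}_{n+1}\rangle$. I split $\ve{u}_{n+1}=\ve{u}^{\star}_{n+1}+\delta^2\ve{u}_{n+1}$ and apply Hölder to get the bound
\[
8\tau\|\nabla\nu\|_\infty\big(2\|\nabla\ve{u}_n\|+\|\nabla\ve{u}_{n-1}\|\big)\big(\|\ve{u}^{\star}_{n+1}\|+\|\delta^2\ve{u}_{n+1}\|\big).
\]
I then use four Young inequalities, as in \eqref{estGL1}--\eqref{estGL2}.
\begin{itemize}
\item[(i)] The two products involving $\delta^2\ve{u}_{n+1}$ are absorbed by $\tfrac12\|\delta^2\ve{u}_{n+1}\|^2$ each. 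This leaves remainders of size $\tau^2\|\nabla\nu\|_\infty^2(\|\nabla\ve{u}_n\|^2+\|\nabla\ve{u}_{n-1}\|^2)$.
\item[(ii)] The products involving $\ve{u}^{\star}_{n+1}$ are split with weight $\varepsilon\tilde\gamma\nu_{\mathrm{min}}$. This gives terms $\tfrac{\tau\|\nabla\nu\|_\infty^2}{\varepsilon\tilde\gamma\nu_{\mathrm{min}}}\|\ve{u}^{\star}_{n+1}\|^2$ plus $\varepsilon\tilde\gamma\tau\nu_{\mathrm{min}}(c_1\|\nabla\ve{u}_n\|^2+c_2\|\nabla\ve{u}_{n-1}\|^2)$.
\end{itemize}
The extra factor $2$ relative to GL raises the constants by a factor $4$. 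I would therefore choose the splitting so that the gradient terms take the form $4\tilde\gamma\tau\nu_{\mathrm{min}}\cdot\varepsilon(32\|\nabla\ve{u}_n\|^2+8\|\nabla\ve{u}_{n-1}\|^2)$, so that $\beta+\gamma=40\varepsilon$ in \eqref{equality}. This matches the restriction $\varepsilon\le 1/40$ and the coefficients $5,1$ appearing in the statement.

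\textbf{Summation and Grönwall.} I sum from $n=1$ to $N-1$ and rewrite the left gradient sum with \eqref{equality} using $\beta=32\varepsilon$ and $\gamma=8\varepsilon$. This produces the positive terms $(1-40\varepsilon)\sum_{n=2}^N\|\nabla\ve{u}_n\|^2$ and the end terms $\varepsilon(40\|\nabla\ve{u}_N\|^2+8\|\nabla\ve{u}_{N-1}\|^2)=8\varepsilon(5\|\nabla\ve{u}_N\|^2+\|\nabla\ve{u}_{N-1}\|^2)$. The initial end terms at indices $1$ and $0$ go into $C$.

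The $\tau^2$ remainders from (i) are dominated by $\tfrac{\tau\|\nabla\nu\|_\infty^2}{\varepsilon\tilde\gamma\nu_{\mathrm{min}}}\cdot$(multiples of $\varepsilon\tilde\gamma\tau\nu_{\mathrm{min}}\|\nabla\ve{u}\|^2$). They can thus be folded into a quantity $a_n$ of the same shape as the left-hand end terms, after adding the harmless $\|\ve{u}_n\|^2$ to the sum, exactly as for GL. Lemma~\ref{Lem:Gronwall-unconditional} with $\alpha=\tau\|\nabla\nu\|_\infty^2/(\varepsilon\tilde\gamma\nu_{\mathrm{min}})$ and $\alpha N=\|\nabla\nu\|_\infty^2T/(\varepsilon\tilde\gamma\nu_{\mathrm{min}})$ then closes the estimate.

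\textbf{Main obstacle.} The delicate step is the bookkeeping in (i): the $\delta^2$ remainders must be absorbed into $a_n$ without creating a CFL-type condition. Their coefficient is $\propto\tau^2\|\nabla\nu\|_\infty^2$, whereas $a_n$ only contains $\varepsilon\tilde\gamma\nu_{\mathrm{min}}\tau\|\nabla\ve{u}\|^2$. I would first check that, after factoring out $\alpha$, these remainders fit into the $\varepsilon\tau$-weighted gradient entries of $a_n$ with the same coefficient of $\varepsilon$. This is the factor-$4$ analogue of the GL step $32\varepsilon\tau\to 40\varepsilon\tau$. If the constants do not line up, I would enlarge the gradient coefficients in $a_n$, at the cost of modifying the constant $C$ accordingly.
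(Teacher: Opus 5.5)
Your proposal follows the paper's proof essentially step for step: test with $4\tau\ve{u}_{n+1}$, bound the curl/grad-div terms below by $4\tau\tilde{\gamma}\nu_{\mathrm{min}}\|\nabla\ve{u}_{n+1}\|^2$, apply H\"older and four Young inequalities with weight $\varepsilon\tilde{\gamma}\nu_{\mathrm{min}}$, sum using \eqref{equality} with $\beta=32\varepsilon$, $\gamma=8\varepsilon$, and close with Gr\"onwall for $\alpha=\tau\|\nabla\nu\|_{\infty}^2/(\varepsilon\tilde{\gamma}\nu_{\mathrm{min}})$. The bookkeeping you flagged does close, and no fallback is needed, provided you keep the prefactor $4\tilde{\gamma}\tau\nu_{\mathrm{min}}$: the left end terms are then $32\varepsilon\tilde{\gamma}\tau\nu_{\mathrm{min}}(5\|\nabla\ve{u}_N\|^2+\|\nabla\ve{u}_{N-1}\|^2)$, consistent with the given $C$, and in $a_n$ this term dominates the $\delta^2$ remainders $\alpha\,\varepsilon\tilde{\gamma}\tau\nu_{\mathrm{min}}(128\|\nabla\ve{u}_n\|^2+32\|\nabla\ve{u}_{n-1}\|^2)$ (since $128\le 160$ and $32\le 32$), exactly as in the paper, so the smaller $8\varepsilon$ coefficient displayed in the statement holds a fortiori.
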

\proof{Setting $\ve{v}=4\tau\ve{u}_{n+1}$ in \eqref{BDF2ROT} yields
\begin{align*}
 &\left(\|\ve{u}_{n+1}\|^2 + \|\ve{u}_{n+2}^{\star}\|^2\right) - \left(\|\ve{u}_{n}\|^2  + \|\ve{u}_{n+1}^{\star}\|^2\right) +\|\delta^2\ve{u}_{n+1}\|^2 + 4\tau\big\|\sqrt{\nu^{\star}_{n+1}}\,\nabla\times\ve{u}_{n+1}\big\|^2  \\ &+ 4\tau\gamma\big\|\sqrt{\nu^{\star}_{n+1}}\,\nabla\cdot\ve{u}_{n+1}\big\|^2 = 8\tau\langle 2\nabla^{\top}\ve{u}_{n}\nabla\nu_{n} -\nabla^{\top}\ve{u}_{n-1}\nabla\nu_{n-1},\ve{u}_{n+1}\rangle .
\end{align*}
For $\ve{u}_{n+1}\in\ve{H}^1_0(\Omega)$, we can write
\begin{align*}
\big\|\sqrt{\nu^{\star}_{n+1}}\,\nabla\times\ve{u}_{n+1}\big\|^2 + \gamma\big\|\sqrt{\nu^{\star}_{n+1}}\,\nabla\cdot\ve{u}_{n+1}\big\|^2  &\geq \nu_{\mathrm{min}}\left(\big\|\nabla\times\ve{u}_{n+1}\big\|^2 + \gamma\big\|\nabla\cdot\ve{u}_{n+1}\big\|^2\right)\\
&\geq \tilde{\gamma}\nu_{\mathrm{min}}\left(\big\|\nabla\times\ve{u}_{n+1}\big\|^2 + \big\|\nabla\cdot\ve{u}_{n+1}\big\|^2\right)\\
&= \tilde{\gamma}\nu_{\mathrm{min}}\big\|\nabla\ve{u}_{n+1}\big\|^2\, ,
\end{align*}
where $\tilde{\gamma} := \mathrm{min}\lbrace 1,\gamma \rbrace$. Hence:
\begin{align}
 &\left(\|\ve{u}_{n+1}\|^2 + \|\ve{u}_{n+2}^{\star}\|^2\right) - \left(\|\ve{u}_{n}\|^2  + \|\ve{u}_{n+1}^{\star}\|^2\right) +\|\delta^2\ve{u}_{n+1}\|^2 + 4\tau\tilde{\gamma}\nu_{\mathrm{min}}\big\|\nabla\ve{u}_{n+1}\big\|^2  \nonumber \\  &\leq  8\tau\langle 2\nabla^{\top}\ve{u}_{n}\nabla\nu_{n} -\nabla^{\top}\ve{u}_{n-1}\nabla\nu_{n-1},\ve{u}_{n+1}\rangle \, .\label{estROT1}
\end{align}
The right-hand side of \eqref{estROT1} is estimated similarly as in the Laplacian case:
\begin{align*}
    &8\tau\langle 2\nabla^{\top}\ve{u}_{n}\nabla\nu_{n} -\nabla^{\top}\ve{u}_{n-1}\nabla\nu_{n-1},\ve{u}_{n+1}\rangle \leq \left\|\delta^2\ve{u}_{n+1}\right\|^2 + 32\varepsilon\tilde{\gamma}\tau\nu_{\mathrm{min}}(4\|\nabla\ve{u}_{n}\|^2 + \|\nabla\ve{u}_{n-1}\|^2) \\
    & + \frac{\tau\|\nabla\nu\|_{\infty}^2}{\varepsilon\tilde{\gamma}\nu_{\mathrm{min}}}\left(128\varepsilon\tilde{\gamma}\tau\nu_{\mathrm{min}}\|\nabla\ve{u}_{n}\|^2 + 32\varepsilon\tilde{\gamma}\tau\nu_{\mathrm{min}}\|\nabla\ve{u}_{n-1}\|^2 + \left\|\ve{u}_{n+1}^{\star}\right\|^2\right).
\end{align*}
Hence:
\begin{align*}
 &\|\ve{u}_{n+1}\|^2 + \|\ve{u}_{n+2}^{\star}\|^2  + 4\tau\tilde{\gamma}\nu_{\mathrm{min}}\Big(\big\|\nabla\ve{u}_{n+1}\big\|^2 - 128\varepsilon\|\nabla\ve{u}_{n}\|^2 - 32\varepsilon\|\nabla\ve{u}_{n-1}\|^2\Big) \leq \nonumber \\  & \|\ve{u}_{n}\|^2  + \|\ve{u}_{n+1}^{\star}\|^2  +\frac{\tau\|\nabla\nu\|_{\infty}^2}{\varepsilon\tilde{\gamma}\nu_{\mathrm{min}}}\left(128\varepsilon\tilde{\gamma}\tau\nu_{\mathrm{min}}\|\nabla\ve{u}_{n}\|^2 + 32\varepsilon\tilde{\gamma}\tau\nu_{\mathrm{min}}\|\nabla\ve{u}_{n-1}\|^2 + \left\|\ve{u}_{n+1}^{\star}\right\|^2\right) \, .\label{estROT2}
\end{align*}
We then add up from $n=1$ to $n=N-1$ and use \eqref{equality}:
\begin{align*}
    &\sum_{n=1}^{N-1}\left(\big\|\nabla\ve{u}_{n+1}\big\|^2 - 32\varepsilon\big\|\nabla\ve{u}_{n}\big\|^2 - 8\varepsilon\big\|\nabla\ve{u}_{n-1}\big\|^2\right)\leq \\
&40\varepsilon\left(\big\|\nabla\ve{u}_{N}\big\|^2 - \big\|\nabla\ve{u}_{1}\big\|^2\right) + 8\varepsilon\left(\big\|\nabla\ve{u}_{N-1}\big\|^2 - \big\|\nabla\ve{u}_{0}\big\|^2\right) + (1-40\varepsilon)\sum_{n=2}^{N}\big\|\nabla\ve{u}_{n}\big\|^2\, ,
\end{align*}
so as to get
\begin{align*}
    & a_N + 4(1-40\varepsilon)\tau\tilde{\gamma}\sum_{n=2}^{N}\nu_{\mathrm{min}}\big\|\nabla\ve{u}_{n}\big\|^2
      \\ &\leq C + 
    \frac{\tau\|\nabla\nu\|_{\infty}^2}{\varepsilon\tilde{\gamma}\nu_{\mathrm{min}}}\sum_{n=1}^{N-1}\left(\left\|\ve{u}_{n+1}^{\star}\right\|^2 + 128\varepsilon\tau\|\nabla\ve{u}_{n}\|^2 + 32\varepsilon\tau\|\nabla\ve{u}_{n-1}\|^2 \right)\\
    &\leq  C + 
    \frac{\tau\|\nabla\nu\|_{\infty}^2}{\varepsilon\tilde{\gamma}\nu_{\mathrm{min}}}\sum_{n=1}^{N-1} a_n \, ,
\end{align*}
where $a_N = \|\ve{u}_{N+1}^{\star}\|^2 + \|\ve{u}_{N}\|^2 + 32\varepsilon\tilde{\gamma}\tau\nu_{\mathrm{\min}}(5\|\nabla\ve{u}_N\|^2 + \|\nabla\ve{u}_{N-1}
    \|^2)$ and $C = \|\ve{u}_{2}^{\star}\|^2 + \|\ve{u}_{1}\|^2  + 32\varepsilon\tilde{\gamma}\tau\nu_{\mathrm{\min}}(5\|\nabla\ve{u}_1\|^2 + \|\nabla\ve{u}_{0}\|^2)$. The proof is completed by using the Grönwall inequality for $\varepsilon \leq 1/40$.
}

\subsection{Comparison}
The analyses presented above show that the three alternative formulations, all of which are mathematically consistent, yield stability estimates without imposing a time-step restriction associated with the IMEX treatments. An important distinction is that the stress-divergence formulation satisfies a stronger estimate: it yields kinetic energy decay, whereas the generalized Laplacian and rotational formulations lead to estimates with possible growth factors. This growth has not been observed in the numerical examples reported below or in previous applications of the generalized Laplacian formulation, even for large $\tau$ \cite{Barrenechea2024}. 

From a computational standpoint, the generalized Laplacian method offers the advantage of having a much sparser velocity matrix than in the other two variants. All three IMEX methods circumvent the need for fixed-point iterations and are still formally second-order accurate in time. Table \ref{tableComparison} summarizes the key differences between all three formulations. 
\begin{table}[ht!]
 \centering
 \caption{Comparison between the theoretical and numerical properties of the (IMEX) stress-divergence (SD), generalized Laplacian (GL) and rotational (ROT) methods.}
 {\begin{tabular}{|c|c|c|c|c|}
    \hline
   \textbf{Form}   & \textbf{Assumption} & \textbf{Stabilization} & \textbf{Growth factor}  & \textbf{Natural BC}\\
   \hline
   SD   & $\nu_n\in L^{\infty}(\Omega)$ & not needed & none & $(2\nu\nabla^{\mathrm{s}}\ve{u})\ve{n}-p\ve{n}$ \\ \hline   
   
   GL  & $\nu_n\in W^{1,\infty}(\Omega)$ & not needed & $\exp\big(\frac{10\|\nabla\nu\|_{\infty}^2T}{\nu_{\mathrm{min}}}\big)$ & $(\nu\nabla\ve{u})\ve{n}-p\ve{n}$\\ \hline   
   
   ROT  & $\nu_n\in W^{1,\infty}(\Omega)$ & grad-div, $\gamma >0$ & $\exp\big(\frac{40\|\nabla\nu\|_{\infty}^2T}{\gamma\nu_{\mathrm{min}}}\big)$ & $(\nu\nabla\times\ve{u})\times\ve{n}-p\ve{n}$ \\
    \hline
 \end{tabular}}
  \label{tableComparison} 
\end{table}

\section{Numerical examples}\label{sec5}
In this section, we consider a set of two-dimensional flow examples to compare the three viscous formulations in terms of temporal accuracy, outflow-boundary behavior, and qualitative flow features. All tests are performed with $\ve{f}=\ve{0}$.
The numerical experiments were implemented using the open-source finite element library Firedrake \cite{FiredrakeUserManual}, and the resulting linear systems were solved with a direct LU factorization using MUMPS \cite{MUMPS:1,MUMPS:2}. The spatial and temporal discretizations are those described in Section \ref{sec3}.

\subsection{Temporal convergence test}
\label{sec:temporal_convergence}
To assess temporal convergence for the different variable-viscosity formulations, we consider an exact unsteady solution of the Navier--Stokes equations. The viscosity field is
\begin{equation}
 \nu(x,y,t)=xy\,f(t)+g(t),   
\end{equation}  where $f$ and $g$ are chosen to that $\nu > 0$. A corresponding exact solution is
\begin{equation}
\label{eq:manufactured_fields}
\ve{u}(x,y,t)= 2f(t)
\begin{pmatrix}
y\\
x
\end{pmatrix} \qquad \text{ and }
\qquad
p(x,y,t)=\left(C-2xy\right)f'(t)
\end{equation} 
for an arbitrary $C$. For this test, we choose $\Omega = (0,1)^2$, $T=3$, $f(t)=\sin^2(t)$ and $g(t)=0.001$, and the initial and Dirichlet data are computed from the analytical solution. The constant $C$ is set so that $p \in L_0^2(\Omega)$ for all $t$, that is,

\[0= 
\int_\Omega p(x,y,t)\,\mathrm{d}\Omega
=
f'(t)\int_0^1\!\!\int_0^1 \left(C-2xy\right)\,\mathrm{d}x\,\mathrm{d}y\, ,
\]
which gives us $C=1/2$. This normalization is required because the test uses $\Gamma_N=\emptyset$, so the pressure at each time is determined only up to an additive constant.

The domain $\Omega$ is discretized in a uniform quadrilateral mesh of \(4\times 4\) square Taylor--Hood elements. Since the exact velocity is affine in space and the exact pressure is bilinear, both fields belong, at each time, to the corresponding finite element spaces. Therefore, the spatial error is negligible up to quadrature and solver tolerances, so the benchmark isolates the temporal discretization error. To avoid polluting the temporal study with start-up errors, we initialize the first two time steps from the exact solution \eqref{eq:manufactured_fields}: $\ve{u}_0=\ve{u}(\ve{x},0)$, $\ve{u}_1=\ve{u}(\ve{x},\tau)$.

The temporal convergence study is performed by evaluating the numerical solution at
the fixed time $t=3$,  for the sequence of time-step sizes
\begin{equation}
\label{eq:tau_sequence}
\tau\in\left\{1,\;10^{-1},\;10^{-2},\;10^{-3}\right\}.
\end{equation}
Since the spatial discretization is kept fixed and the manufactured fields are
represented exactly in space, the refinement in \eqref{eq:tau_sequence} isolates the
dependence of the error on \(\tau\). At the final evaluation time \(t=3\), the numerical velocity and pressure are compared with the exact solution \eqref{eq:manufactured_fields}. The reported errors are
\begin{equation}
\label{eq:error_measures}
e_u(\tau)
=
\|\ve{u}(\ve{x},T)
-
\ve{u}_h(\ve{x},T)\|_{L^2(\Omega)},
\qquad
e_p(\tau)
= 
\|p(\ve{x},T)
-
p_h(\ve{x},T)\|_{L^2(\Omega)}.
\end{equation}
Table~\ref{tab:temporal_convergence} summarizes the results, which confirm the expected second order for all three formulations. Among the cases considered, the generalized Laplacian formulation gives the smallest errors in this test, although all three formulations exhibit the expected second-order temporal convergence.  


\begin{table}[h!]
\centering
\caption{Temporal convergence history for the exact-solution test at
\(t=3\).}
\label{tab:temporal_convergence}
\begin{tabular}{|c|c|c|c|c|c|}
\hline
Method & $\tau$ & $e_u(\tau)$ & rate & $e_p(\tau)$ & rate \\
\hline
\multirow{4}{*}{ROT}
& $1$       & $8.253\times10^{-1}$ & -     & $4.595\times10^{-1}$ & -     \\
& $10^{-1}$ & $7.758\times10^{-3}$ & 2.027 & $4.371\times10^{-3}$ & 2.022 \\
& $10^{-2}$ & $6.007\times10^{-5}$ & 2.111 & $2.559\times10^{-5}$ & 2.233 \\
& $10^{-3}$ & $5.793\times10^{-7}$ & 2.016 & $2.416\times10^{-7}$ & 2.025 \\
\hline
\multirow{4}{*}{SD}
& $1$       & $3.313\times10^{-1}$ & -     & $1.274\times10^{-1}$ & -     \\
& $10^{-1}$ & $4.941\times10^{-3}$ & 1.826 & $8.438\times10^{-4}$ & 2.179 \\
& $10^{-2}$ & $4.919\times10^{-5}$ & 2.002 & $5.257\times10^{-6}$ & 2.206 \\
& $10^{-3}$ & $4.910\times10^{-7}$ & 2.001 & $5.029\times10^{-8}$ & 2.019 \\
\hline
\multirow{4}{*}{GL}
& $1$       & $8.713\times10^{-1}$ & -     & $2.463\times10^{-1}$ & -     \\
& $10^{-1}$ & $5.095\times10^{-3}$ & 2.233 & $1.238\times10^{-3}$ & 2.299 \\
& $10^{-2}$ & $4.040\times10^{-5}$ & 2.101 & $3.574\times10^{-6}$ & 2.539 \\
& $10^{-3}$ & $3.915\times10^{-7}$ & 2.014 & $2.917\times10^{-8}$ & 2.088 \\
\hline
\end{tabular}
\end{table}


\subsection{Venturi channel}
The second example considers flow through a two-dimensional Venturi channel. The domain has length $L=5$, inlet and outlet height $H=1$, and a smooth symmetric contraction. The throat is located at $x=0$, where
the channel height is $H_{\mathrm{th}}=0.4$. In the contraction region, the half-height is prescribed as
$h(x)=0.2+0.15[1-\cos(2\pi x/3)]$ for $x\in[-1.5,1.5]$, while $h(x)=0.5$ along the rest of the domain, $1.5 \leq |x| \leq 2.5$.

The fluid is described by the Carreau--Yasuda law \eqref{CY}, with
\begin{equation*}
\nu_{\mathrm{max}}=10,\qquad
\nu_{\mathrm{min}}=1,\qquad
\lambda=4,\qquad
a=1.25,\qquad
n=0.25.
\end{equation*}
A parabolic velocity profile with maximum value $u_{\max}=15$ is prescribed at the inlet, no-slip Dirichlet conditions are imposed
on the walls, and homogeneous natural boundary conditions are imposed at the outlet. The computation is performed up to $T=3$, using $\tau=10^{-2}$ and 6356 Taylor--Hood
$(\mathbb{P}_2/\mathbb{P}_1)$ elements.

We first compare shear-stress contours. The Cauchy stress tensor is given by
\begin{equation}
\sigma=-p\mathbb{I}+2\nu\nabla^{\mathrm{s}}\ve{u}\, ,
\end{equation}
and we report its off-diagonal component $\sigma_{xy}$. Figure~\ref{fig:venturi_isoclines} shows the results for the GL, SD, and ROT formulations. The downstream region reflects the different outflow treatments associated with each formulation. Consistently with what is known for constant-viscosity flows, the GL formulation with homogeneous pseudo-traction conditions provides the closest approximation to a developed outflow, as indicated by nearly parallel contours near the outlet.

\begin{figure}[h!]
    \centering

    \subfigure[GL]{%
        \includegraphics[width=0.85\textwidth]{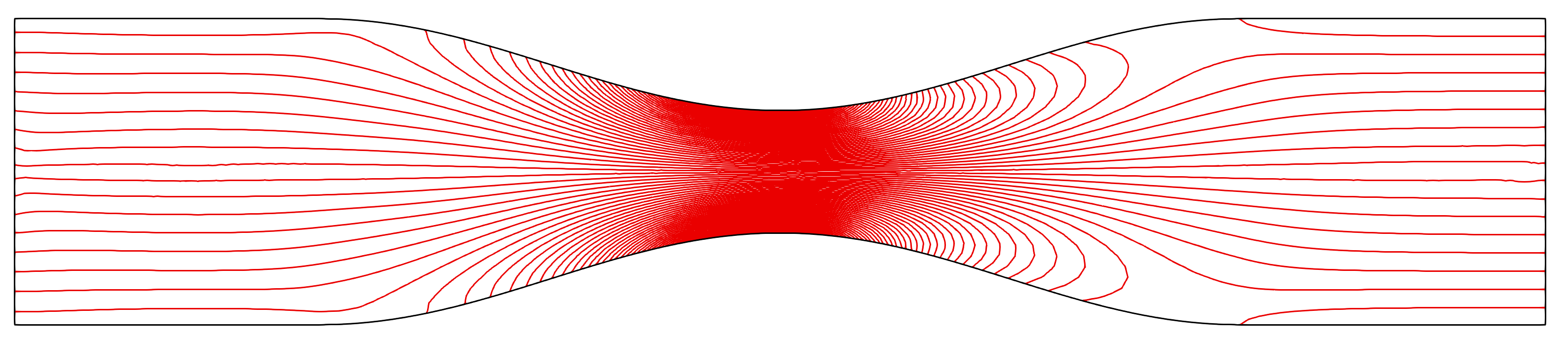}%
        \label{fig:venturi_isoclines_gl}%
    }
    \subfigure[SD]{%
        \includegraphics[width=0.85\textwidth]{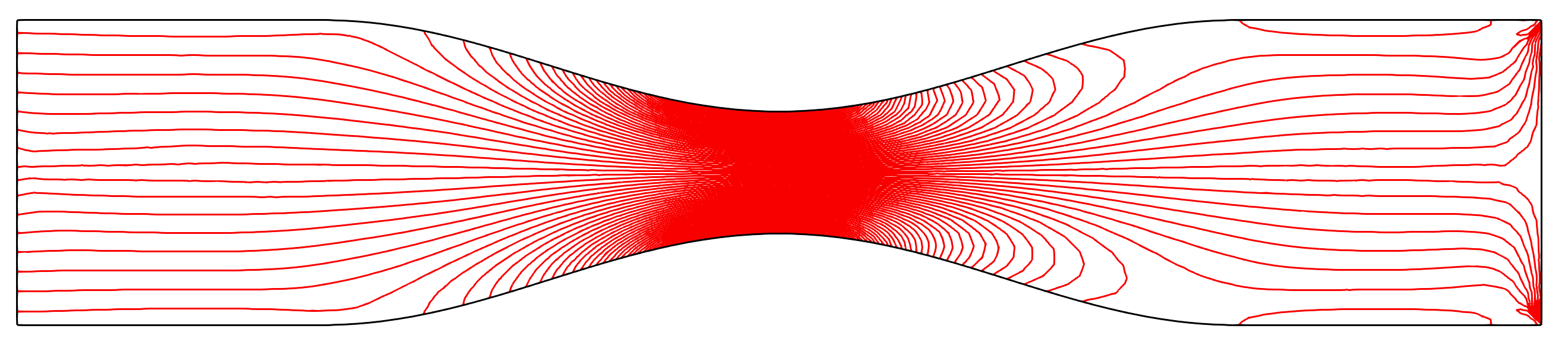}%
        \label{fig:venturi_isoclines_sd}%
    }
    \subfigure[ROT]{%
        \includegraphics[width=0.85\textwidth]{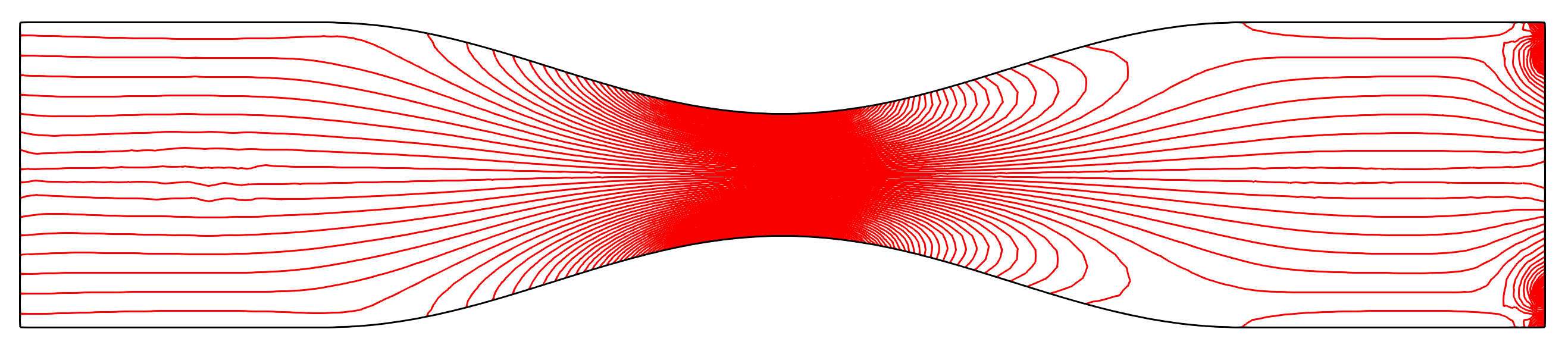}%
        \label{fig:venturi_isoclines_rot}%
    }

    \caption{Venturi channel. Isolines of the shear-stress component $\sigma_{xy}$ for the GL, SD, and ROT formulations with the corresponding homogeneous natural outflow conditions.}
    \label{fig:venturi_isoclines}
\end{figure}

Next, we assess the influence of the outlet location on the pressure field for each formulation by considering truncated domains. Pressure profiles are sampled along the centerline using three computational domains: the complete Venturi tube used above, a truncated domain retaining
$80\%$ of the original length, and another one retaining $50\%$ of the
original length. The truncation procedure is illustrated in Figure~\ref{fig:venturi_truncation_marks}.

\begin{figure}[h!]
    \centering
    \includegraphics[width=0.65\textwidth]{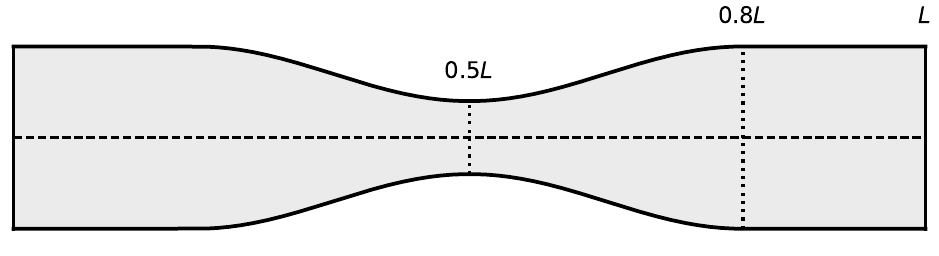}
    \caption{Venturi channel. Complete reference domain, centerline used for pressure sampling, and outlet locations used for the truncated-domain pressure profiles.}
    \label{fig:venturi_truncation_marks}
\end{figure}

The corresponding pressure profiles are shown in Figure~\ref{fig:venturi_pressure} for the GL, SD, and ROT formulations. The GL pseudo-traction formulation is less sensitive to domain truncation, with pressure curves that remain nearly overlapping even for the severe $50\%$ truncation.

\begin{figure}[H]
    \centering

    \subfigure[GL]{%
        \includegraphics[width=0.92\textwidth]{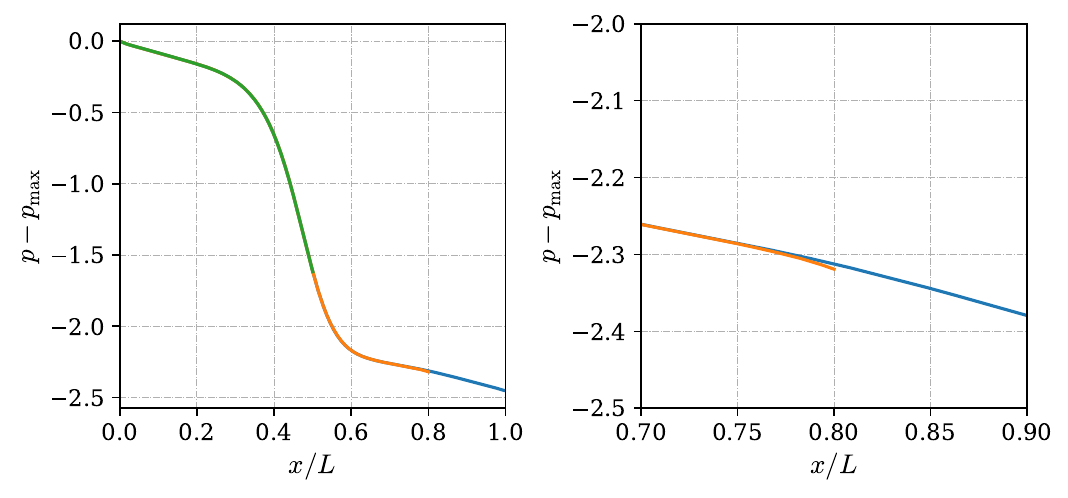}%
        \label{fig:venturi_pressure_gl}%
    }

    \subfigure[SD]{%
        \includegraphics[width=0.92\textwidth]{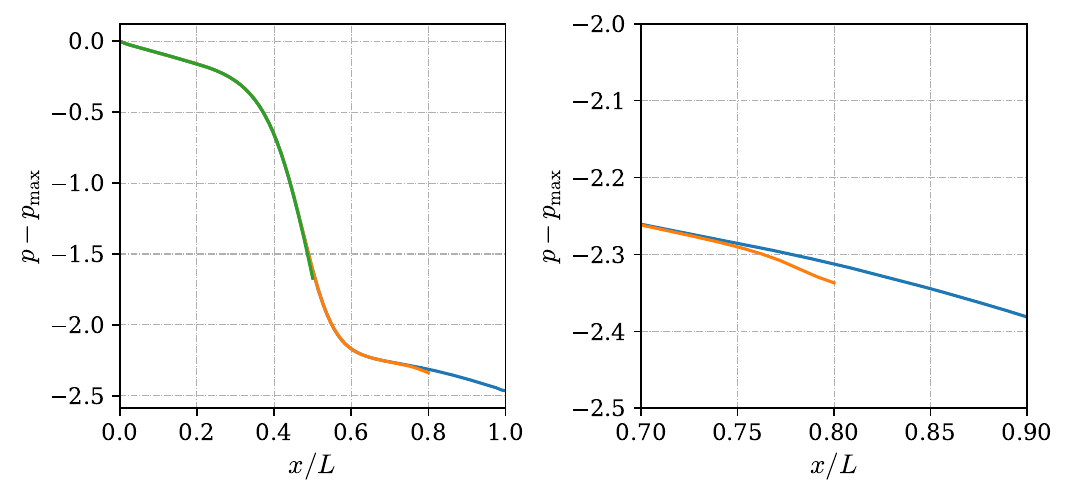}%
        \label{fig:venturi_pressure_sd}%
    }

    \subfigure[ROT]{%
        \includegraphics[width=0.92\textwidth]{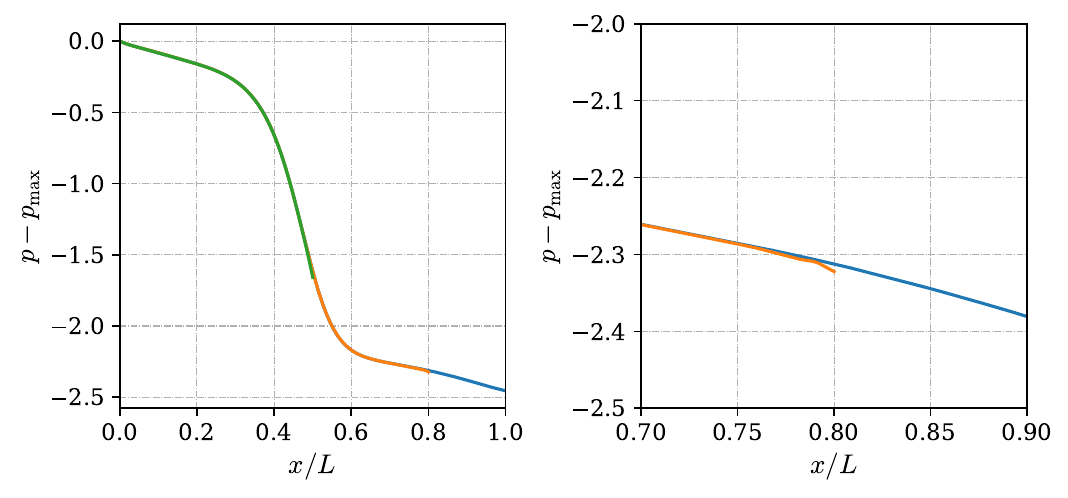}%
        \label{fig:venturi_pressure_rot}%
    }

    \caption{Venturi channel. Centerline pressure profiles computed on the complete domain, on the $80\%$-length truncated domain, and on the $50\%$-length truncated domain.}
    \label{fig:venturi_pressure}
\end{figure}

\subsection{Von Kármán vortex street}

The last example considers flow past a cylinder in a two-dimensional channel, leading to the formation of a von Kármán vortex street. The fluid is again described by the Carreau--Yasuda law \eqref{CY}, with the same rheological parameters considered in the previous example, but now with a parabolic inlet profile with maximum
velocity \(u_{\max}=75\), no-slip on the channel walls and on the cylinder boundary, and homogeneous natural boundary on the outlet. The simulation is performed up to \(T=3\), using \(\tau=10^{-2}\) and 17814 elements.

This example is used to evaluate whether the different natural outflow conditions
affect the transport of vortical structures through the outlet. The vortex street is
expected to be convected downstream and to leave the computational domain in a
similar manner for the three formulations. The corresponding visual comparison is shown in Figure~\ref{fig:vk_vortices}. In this case, the three formulations produce very similar streamline patterns. This behavior is consistent with the downstream flow being dominated by vortex transport, for which convective effects are more prominent. Together with the previous example, these results suggest that the differences induced by the outflow treatment are more pronounced in lower-Reynolds regimes. For more convection-dominated flows, other aspects, such as matrix sparsity, may become more relevant.

\begin{figure}[h!]
\centering
\subfigure[GL]{%
    \includegraphics[width=0.99\textwidth]{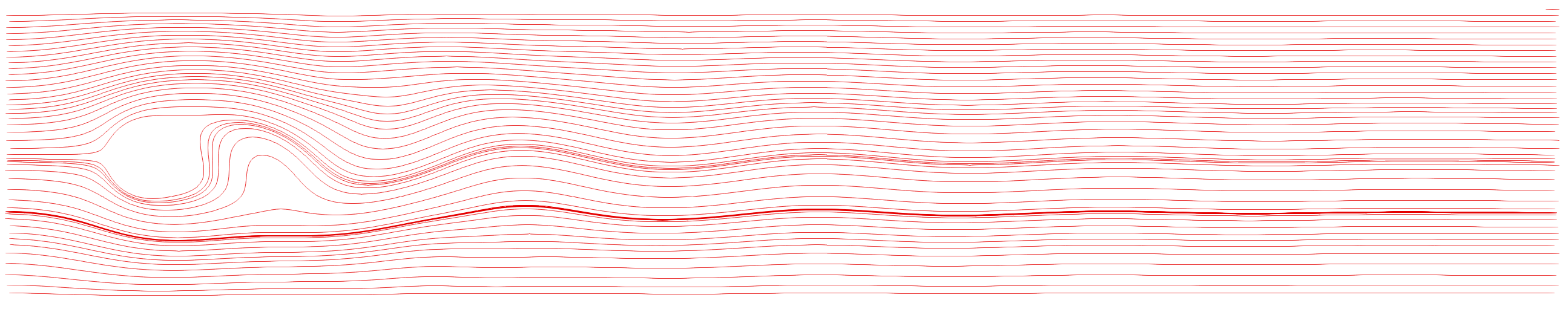}%
    \label{fig:vk_vortices_gl}%
}

\subfigure[SD]{%
    \includegraphics[width=0.95\textwidth]{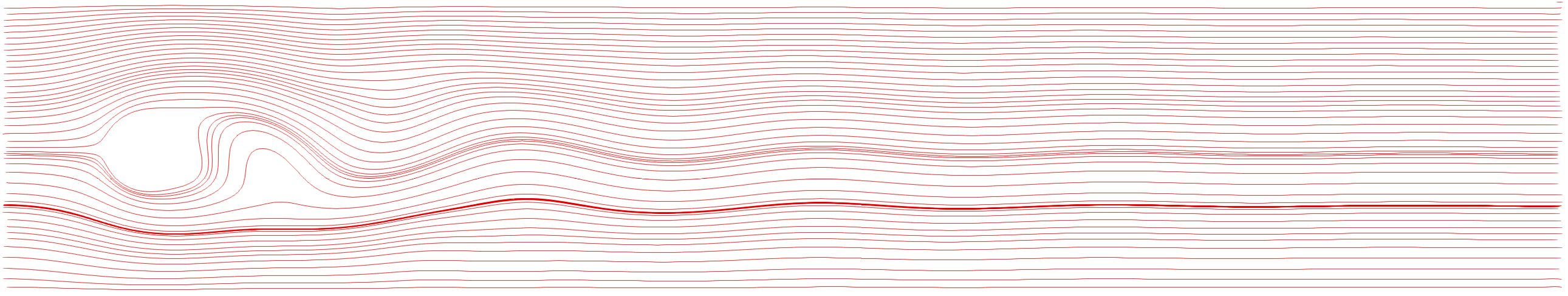}%
    \label{fig:vk_vortices_sd}%
}

\subfigure[ROT]{%
    \includegraphics[width=0.95\textwidth]{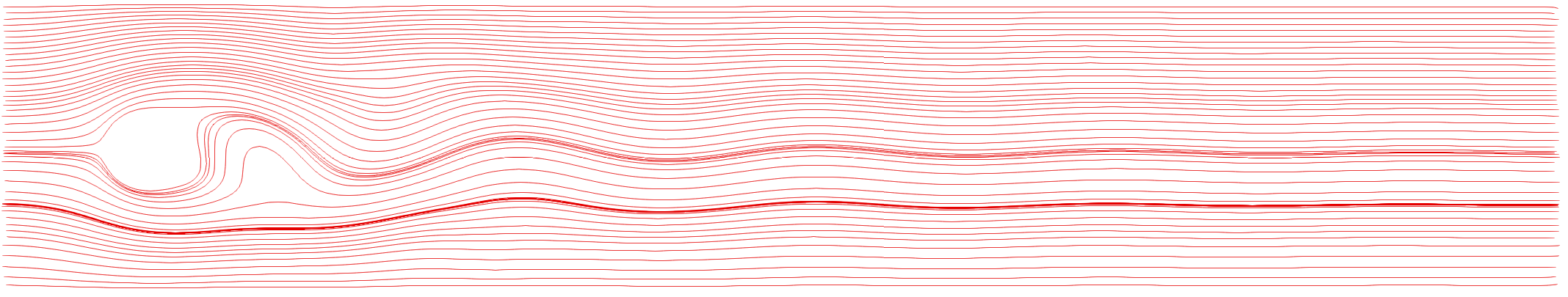}%
    \label{fig:vk_vortices_rot}%
}

\caption{Von Kármán vortex street. Streamlines for the GL, SD, and ROT formulations at \(t=3\).}
\label{fig:vk_vortices}
\end{figure}

\newpage
\section{Concluding remarks}\label{sec6}
In this work, we have compared the classical stress-divergence (SD) form of the Navier--Stokes system with the variable-viscosity generalization of the Laplacian (GL) and rotational (ROT) forms. These generalized formulations recover the natural outflow boundary conditions of their constant-viscosity counterparts. The formulations differ in four important aspects:
\begin{enumerate}
    \item \textsl{Outflow BCs:} the SD variant is formulated in terms of normal tractions, which may be more appropriate, e.g., in fluid-structure simulations; by contrast, the pseudo-tractions induced by the GL formulation appear better suited for truncated domains, as suggested by the numerical examples considered here.
    \item \textsl{Matrix sparsity:} with an appropriate IMEX treatment of the momentum equation, all formulations lead to symmetric diffusion matrices; however, the corresponding matrix in the IMEX-GL case is sparser and has a block-diagonal structure with $d$ identical velocity blocks.
    \item \textsl{Spatial discretization:} the SD and GL formulations allow for standard Lagrangian interpolation in space, while the ROT variant requires either $H($curl,$\Omega)$ elements or grad-div stabilization.
    \item \textsl{Temporal stability:} according to our theoretical analysis, the GL and ROT methods might induce a non-physical energy growth for long-term simulations; however, we have not observed such phenomena in numerical experiments, which indicates that our analysis may not be sharp. 
    \end{enumerate}

    Rather than advocate one formulation over the others, our purpose is to provide computational rheologists with information to assist them in selecting the best-suited method for the applications at hand. In the flow examples considered here, the ROT variant did not show a clear practical advantage over the SD or GL formulations. Nevertheless, the rotational formulation presented and analyzed here may be useful in other settings involving curl-based operators, such as wave propagation in non-homogeneous media \cite{Boffi2026,Srmny2013}. This direction will be explored in future work.

\section*{Acknowledgments}
DRQP acknowledges funding by the Federal Ministry of Education and Research (BMBF) and the Ministry of Culture and Science of the German State of North Rhine-Westphalia (MKW) under the Excellence Strategy of the Federal Government and the Länder. E. Castillo and F. Galarce gratefully acknowledge the financial support provided by ANID Chile through FONDECYT Regular Project No. 1250287.

\bibliographystyle{unsrtnat}
\bibliography{references}%

\end{document}